\newtheorem{theorem}{Theorem}[section]
\newtheorem{proposition}{Proposition}[section]
\newtheorem{lemma}{Lemma}[section]
\newtheorem{remark}{Remark}[section]
\newcommand{\NX}{N_{\mathcal{X}}}
\newcommand{\brref}[1]{(\ref{#1})}
\newcommand{\gmdp}{
\author[G. Besana]{ Gian Mario Besana}
\address {Gian Mario Besana \\ College of Computing and Digital Media\\ DePaul University\\1 E. Jackson blvd\\Chicago IL 60604 USA}
 \email{gbesana@depaul.edu}}
\newcommand{\Pin}[1]{{\mathbb P}^{#1}}
\newcommand{\lra}{\longrightarrow}
\newcommand{\rk}[1]{{\rm rk}\,(#1)}
\newcommand{\bxi}{\{\mathbf{X}_i\}}
\newcommand{\bxj}{\{\mathbf{X}_j\}}
\newcommand{\nbxi}{\mathbf{X_i}}
\newcommand{\nbXi}{\mathbf{X}_i}
\newcommand{\codim}{codim}
\title[Matrices dropping rank in codim $1$]{Matrices dropping rank in codimension one and critical loci in computer vision}
\author[M. Bertolini]{Marina Bertolini}
\address{Marina Bertolini\\ Dip. di Matematica F. Enriques\\ Universit\'a degli studi di Milano\\ Milano, Italy.}
\email{marina.bertolini@unimi.it}
\author[R. Notari]{Roberto Notari}
\address{Roberto Notari\\Dip. di Matematica F. Brioschi, Politecnico di Milano, Milano, Italy.}
\email{roberto.notari@polimi.it}
\author[C. Turrini]{Cristina Turrini}
\address{Cristina Turrini\\ Dip. di Matematica F. Enriques\\ Universit\'a degli studi di Milano\\ Milano, Italy.}
\email{cristina.turrini@unimi.it}
\keywords{Critical loci; Projective reconstruction; Computer vision; Multiview geometry; Hilbert-Burch theorem}
\subjclass[2010]{ 14M12 15A21 15B99}
\begin{document}
\begin{abstract}
  Critical loci for projective reconstruction from three views in four dimensional projective space are defined by an ideal generated by maximal minors of suitable $4 \times 3$ matrices, $N,$ of linear forms. Such loci are classified in this paper, in the case in which $N$ drops rank in codimension one, giving rise to reducible varieties. This leads to a complete classification of matrices of size $(n+1) \times n$ for $n \le 3,$ which drop rank in codimension one. Instability of reconstruction near non-linear components of critical loci is explored experimentally.
\end{abstract}
\maketitle


\section{Introduction}

Linear projections from $\Pin{k}$ to $\Pin{2},$ and even
from $\Pin{k}$ to $\Pin{m}, m\ge 3,$ are of interest to the
computer vision community as simple models of the process of taking pictures of particular three-dimensional scenes,
see for example \cite{Shas-Wo}, \cite{Hart-Schaf},
\cite{hua-fos-ma}, \cite{vid2}, \cite{vid4}, \cite{vid3},
\cite{vid1}.
 In this framework, the authors have investigated in several works, \cite{be-tur1}, \cite{tubbCHAPTER}, \cite{tubbICCV07},
 \cite{tubbISVC08}, \cite{tubbMagic}, \cite{LAIA}, \cite{bntMEGA}, the algebro-geometric properties of varieties that arise as {\it critical loci} for projective reconstruction from multiple views, i.e. from multiple projections $\Pin{k} \dasharrow \Pin{2}$ (see Sections \ref{prelimCV} and \ref{critlocsetup} for an introduction to the reconstruction problem and the notion of critical locus). In particular, \cite{LAIA} presents a comprehensive treatment of the  families of varieties involved, if the number of views is the minimum necessary for reconstruction. In \cite{LAIA} it is shown that, under suitable genericity assumptions, critical loci are either hypersurfaces if the ambient space is odd dimensional, or special determinantal varieties of codimension two if the ambient space is even dimensional. The latter codimension-two varieties are studied in great detail in \cite{bntMEGA}, in the case of $3$ projections (views) from $\Pin{4}$ as ambient space, where they are shown to fill the irreducible component of the Hilbert scheme of $\Pin{4}$ whose general element is a classical Bordiga surface.

 When genericity assumptions are dropped, one is led to consider a number of degenerate configurations of centers of
 projection and corresponding degenerate critical loci. In the classical case of projections from $\Pin{3}$ to $\Pin{2},$
 Hartley and Kahl described degenerate cases in \cite{Hart-Ka}. This work revisits the case of $3$ projections from $\Pin{4}$
 to $\Pin{2},$ while dropping the genericity assumptions, in order to conduct a detailed analysis of possible degenerate cases of critical loci, in a proper algebro-geometric setting.

 In \cite{bntMEGA} it is shown that the minimal generators of the ideal of the critical locus for 3 projections
 from $\Pin{4}$ to $\Pin{2}$ are cubic polynomials, arising as maximal minors of a suitable $4 \times 3$ matrix $N$
 of linear forms. Genericity assumptions are reflected in the fact that such minors do not share any common factors.
 Given the focus of this work, one is naturally led to consider $(n+1) \times n$ matrices of linear forms, whose minors
 have common factors. Matrices of type $ (n+1) \times n $ that drop rank in codimension
$2$ have been intensively studied from a geometrical point of view in the framework of liaison
theory (e.g. see Gaeta's Theorem), while, in the framework of commutative algebra, many
researchers have contributed to generalize Hilbert--Burch Theorem
by deeply studying homological properties of rings and modules.


On the contrary, matrices of type $(n+1) \times n$ of linear forms
that drop rank in codimension $1$ do not seem to have been studied
with any systematic approach. In order to address the main goal of
this work, a classification of canonical forms of such matrices,
over any field, for $ n\leq 3$ is conducted. The authors believe
that this contribution may have relevance in other contexts and be
of more general interest. Theorem \ref{4x3-deg1} and Theorem
\ref{4x3-deg2} contain such classification, respectively for
minors with a linear or quadratic common factor. Degeneration loci
of such matrices are then investigated in Section \ref{DegLoc}, in the case of interest for this work, when the ambient
space is four-dimensional.
Leveraging this analysis, a complete geometrical classification of degenerate
critical loci for three projections from $\Pin{4}$ to $\Pin{2}$ is presented in Theorem \ref{crtiloctheo}. For degenerate critical loci in
situations in which the reconstruction of the trifocal tensor
still makes sense, see Section \ref{rec_tens_deg}, and the tensor can be uniquely reconstructed,
we conduct instability experiments, following \cite{tubbCHAPTER}.

The paper is organized as follows: Section \ref{34mat} contains
the classification of $4 \times 3$ matrices whose maximal minors
share a common factor, see Theorem \ref{4x3-deg1} and Theorem
\ref{4x3-deg2}. In Section \ref{DegLoc} the degeneracy loci of
such matrices are studied. Section \ref{prelimCV} introduces
multiview geometry and basic facts on the reconstruction problem
in computer vision. Section \ref{critlocsetup} presents the notion
of critical locus for three projections from four-dimensional
projective space, and, relating its study to matrices investigated
in Section \ref{34mat}, presents a classification of such loci,
see Theorem \ref{crtiloctheo}. In Section \ref{rec_tens_deg} we
address the actual possibility of reconstructing the trifocal
tensor in each situation appearing in Theorem \ref{crtiloctheo}.
Finally, in Section \ref{inst_exp}, we conduct experiments to
investigate the instability of the reconstruction algorithm in a
neighborhood of each non linear component of the critical loci
obtained in Theorem \ref{crtiloctheo}.

\section{Classification of $ 4 \times 3 $ matrices that drop rank in codimension $ 1 $}
\label{34mat}

In this section we compute the canonical forms of $(n+1) \times n$
matrices, with $n \leq 3$, of linear forms whose maximal minors
have a non trivial common factor, up to elementary operations on
rows and columns. As the maximal minors have degree at most three,
the degree of the common factor is either $1$ or $2$.

As standard notations, $R$ is the polynomial ring in $x_0, \dots,
x_r$ over any field $K$ with $char(K) \neq 2.$ Let $A$ and $B$ be matrices with entries in $R.$ $A^T$ denotes the transpose of $A.$ $A_{\widehat{i_1\dots i_r}}$ denotes the matrix obtained from $A$ by deleting the $i_1, \dots, i_r$ rows. Assuming that $A$ and $B$ have compatible sizes, $AB$ denotes the product of the two matrices, while $(A|B)$ denotes matrix concatenation. Let $f_1, \dots f_r$ be polynomials in $R;$ then $\langle f_1, \dots, f_r\rangle$ denotes the ideal generated by the ${f_i}$s.

In the rest of this work we will often deal with matrices whose entries are linear forms. Unless otherwise explicitly stated, we will always assume that linear forms appearing in columns of matrices involved in our arguments, are as linearly independent as possible, given explicit assumptions made in each instance.

\subsection{Matrices with maximal minors having a common factor of degree $1$}
\label{deg1}

In this subsection, we classify the $ 4 \times 3 $ matrices whose
maximal minors have a greatest common divisor of degree $ 1,$ see Theorem \ref{4x3-deg1}. To
achieve our goal, we analyze first matrices of type $ 2 \times 1 $
and $ 3 \times 2,$ see paragraph below and Proposition \ref{case3x2}. Along the way, we prove a general technical result, see Lemma \ref{case-(n+2)x(n)}.

Any $ 2 \times 1 $ matrix of linear forms whose maximal minors have a greatest common divisor of degree $ 1$ has entries that differ by a
multiplicative constant. Hence it can be reduced, via elementary row operations, to the form $$ \left( \begin{array}{c} 0 \\ \ell \end{array} \right). $$

\begin{lemma} \label{case-(n+2)x(n)}
Let $ M $ be a $ (n+2) \times n $ matrix of linear forms whose $
(n+1) \times n $ submatrices do not drop rank in codimension $ 1$.
Let $ P $ be a $ 1 \times (n+2) $ matrix of forms of degree $ n $
such that $ P M = 0$. Then, the forms in $ P $ are linearly
dependent, and a free resolution of $ P $ is $$ 0 \to
\begin{array}{c} R(-n) \\ \oplus \\ R^n(-n-1) \end{array}
\stackrel{\widetilde{M}}{\longrightarrow} R^{n+2}(-n)
\stackrel{P}{\longrightarrow} R \to 0,$$ where $\widetilde{M}$ is
the concatenation of $M$ and a suitable column of constants.
\end{lemma}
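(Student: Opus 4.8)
The plan is to identify $P$, up to a nonzero constant, with the vector of signed maximal minors of an augmented matrix $\widetilde{M}=(M\mid c)$, the column $c$ of constants being produced \emph{exactly} by the claimed linear dependence; the two assertions of the lemma are then two faces of one construction, and the resolution follows from the Hilbert--Burch theorem. Assume $P\neq 0$, the opposite case being vacuous. I first analyse the left syzygy module $L=\{w\in R^{1\times(n+2)}:wM=0\}=\ker(M^T)$, which has rank $2$. For each $i$ the submatrix $M_{\widehat{i}}$ does not drop rank in codimension one, so $I_n(M)=\sum_i I_n(M_{\widehat{i}})$ has grade $\ge 2$ (its zero locus is $\bigcap_i V(I_n(M_{\widehat{i}}))$, an intersection of loci of codimension $\ge 2$). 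The Buchsbaum--Rim complex of $M\colon R^n\to R^{n+2}$ is therefore acyclic, and standard structure results then give that $L$ is generated in degree $n$ by the minor syzygies $\delta^{(i)}$, where $\delta^{(i)}$ carries the signed $n\times n$ minors of $M_{\widehat{i}}$ in the slots $\neq i$ and a $0$ in slot $i$, so that $\delta^{(i)}M=0$. In particular $L_{<n}=0$ and $L_n=\langle \delta^{(0)},\dots,\delta^{(n+1)}\rangle_K$.

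Since $P\in L$ has degree $n$, I may write $P=\sum_i\lambda_i\delta^{(i)}$ with $\lambda_i\in K$ not all zero. Assemble the $(n+2)\times(n+2)$ matrix $D=(\delta^{(i)}_j)_{i,j}$, whose $(i,j)$ entry is, up to a sign, the minor $m_{\widehat{ij}}$ of $M$ omitting rows $i$ and $j$; for the correct Pl\"ucker sign convention $D$ is \emph{skew-symmetric} with zero diagonal. This alternating property is the technical heart of the argument and requires careful Laplace-expansion bookkeeping. Granting it, $P=\lambda^TD$, whence
$$\textstyle\sum_j\lambda_jP_j=\lambda^TD\lambda=0,$$
where the vanishing uses $\operatorname{char}(K)\neq 2$ (this is precisely where that hypothesis enters). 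This is the asserted linear dependence, and I take $c:=\lambda$. Then $\widetilde{M}=(M\mid c)$ satisfies $P\widetilde{M}=(PM\mid Pc)=0$, and expanding the maximal minors of $\widetilde{M}$ along the column $c$ shows the $j$-th signed minor equals $\sum_i\pm\lambda_i m_{\widehat{ij}}=\pm P_j$. Thus $P$ is, up to sign, the vector of maximal minors of $\widetilde{M}$, and $I_{n+1}(\widetilde{M})=\langle P_0,\dots,P_{n+1}\rangle$.

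It remains to verify the acyclicity hypotheses for $0\to R^{n+1}\xrightarrow{\widetilde{M}}R^{n+2}\xrightarrow{P}R$. The crucial depth condition is $\operatorname{grade}I_{n+1}(\widetilde{M})\ge 2$, and I reduce it to showing the entries of $P$ have no common factor: if an irreducible $h$ divided every $P_j$, then $P/h$ would be a left syzygy of $M$ of degree $n-\deg h<n$, contradicting $L_{<n}=0$; hence $\langle P_0,\dots,P_{n+1}\rangle$ has height $\ge 2$, and grade $\ge 2$ since $R$ is Cohen--Macaulay. The remaining numerics are immediate: $P\neq 0$ gives $\operatorname{rank}P=1$ and a nonzerodivisor in $I_1(P)$, while the minors being nonzero gives $\operatorname{rank}\widetilde{M}=n+1$, forcing injectivity of $\widetilde{M}$ over the domain $R$. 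The Buchsbaum--Eisenbud criterion (equivalently, the Hilbert--Burch theorem) now shows the complex is a free resolution of $R/\langle P_0,\dots,P_{n+1}\rangle$.

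Finally I read off the grading: the entries of $c$ are constants, those in the columns of $M$ are linear, and the $P_j$ have degree $n$, which yields exactly the graded maps $R(-n)\oplus R^n(-n-1)\xrightarrow{\widetilde{M}}R^{n+2}(-n)\xrightarrow{P}R$ of the statement, with $\widetilde{M}$ the concatenation of $M$ and the constant column $c=\lambda$. The main obstacle throughout is establishing the skew-symmetry of $D$ together with the consistent sign conventions relating the syzygy signs of the $\delta^{(i)}$ to the Laplace expansion of the minors of $\widetilde{M}$; the Buchsbaum--Rim input guaranteeing that $L$ is generated in degree $n$ (so that $L_{<n}=0$ and $L_n$ is spanned by the $\delta^{(i)}$) is the second delicate point, while the rank and grade verifications are then formal.
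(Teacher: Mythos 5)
Your proposal is correct and follows essentially the same route as the paper's proof: you build the same skew-symmetric matrix $D$ of signed minors $\det(M_{\widehat{ij}})$, use the Buchsbaum--Rim (four-term) complex to write $P$ as a constant combination $\lambda^T D$ of its rows, deduce the linear dependence from $\lambda^T D\lambda=0$, and conclude via Hilbert--Burch applied to $\widetilde{M}=(M\mid\lambda)$. The only difference is cosmetic: you spell out the grade verification for $I_{n+1}(\widetilde{M})$ (no common factor of the $P_j$ since $L_{<n}=0$), which the paper leaves implicit.
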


\begin{proof} Let $ D_{ij} = (-1)^{i+j} \det(M_{\widehat{ij}}) $ if $ i < j$, $ D_{ij} = -D_{ji} $ if $ i > j$ and
$ D_{ii} = 0.$
Let $ D $ be the order $ n+2 $ square matrix whose elements are $ D_{ij}$. By construction, $ D $ is skew--symmetric and
homogeneous of degree $ n$. Moreover, $ D M = 0.$  To see this, note that each row of $ D$
consists of maximal minors of $ M_{\widehat{i}},$ with the appropriate sign. As such a matrix does not drop rank in codimension $ 1$,
we can apply Hilbert--Burch Theorem to get the claim. $ D $ drops rank in codimension $ 3 $ and so
$$ 0 \to R^n(-n-2) \stackrel{M}{\longrightarrow} R^{n+2}(-n-1) \stackrel{D}{\longrightarrow} R^{n+2}(-1) \stackrel{M^T}
{\longrightarrow} R^n \to 0 $$ is a minimal free resolution of $ M^T.$

Let $ P $ be a $ 1 \times (n+2) $ matrix of forms of degree $ n $ such that $ P M = 0$. Then, there exists a
$ 1 \times (n+2) $
 constant matrix $ X $ such that $ P = X D$. As $ D $ is skew--symmetric, $ X D X^T = -X D X^T $ and so $ P X^T = 0$,
 that is to say, the elements of $ P $ are linearly dependent. Furthermore, $ P M = 0$, and so, letting
 $ \widetilde{M}= (X^T | M) $ , it is $ P \widetilde{M} = 0.$ Moreover, it is easy to check that the maximal minors of
 $ \widetilde{M} $ are equal to the elements of $ P$, up to sign. Hence, Hilbert--Burch Theorem gives the  free resolution of
 $ P$, as in the statement.
\end{proof}

The following additional lemma deals with $ 2 \times 2 $ matrices of linear forms, whose determinant is the product of linear forms.
\begin{lemma} \label{case-2x2}
Let $$ A = \left( \begin{array}{cc} a_{11} & a_{12} \\ a_{21} & a_{22} \end{array} \right) $$ be a square matrix of linear
forms such that $ \det(A) = uv $ for suitabe linear forms $ u,v$. Then, up to elementary operations on the columns of $ A$,
the elements of a column are linearly dependent.
\end{lemma}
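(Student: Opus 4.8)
The plan is to reduce modulo the linear factor $u$, where the ambient ring becomes a domain and the determinant vanishes, and then transport the resulting proportionality of the columns back to $R$ by a single elementary column operation, invoking the identity $\det(A)=uv$ a second time to absorb the correction terms that survive the reduction. Throughout, write $C_1=(a_{11},a_{21})^T$ and $C_2=(a_{12},a_{22})^T$ for the columns of $A$, and recall that a nonzero linear form is irreducible, so that the polynomial ring and each of its quotients by a nonzero linear form are UFDs.

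First I would dispose of the genuinely degenerate configurations. If $uv=0$ as a form, then $\det(A)=0$ already in the domain $R$, and the argument below applies verbatim with $R$ in place of the quotient $\bar R=R/\langle u\rangle$. If one of $C_1,C_2$ is the zero column, or if a full row reduces to zero modulo $u$, then a direct one-step column operation (or no operation at all) produces a column that is a constant multiple of $u$, hence has linearly dependent entries. Assuming none of these holds, set $\bar R=R/\langle u\rangle$, a polynomial domain, and pass to $\bar A$. Since $u\mid \det(A)$, we have $\det(\bar A)=0$, so over $\mathrm{Frac}(\bar R)$ the columns $\bar C_1,\bar C_2$ are linearly dependent; as neither is zero, $\bar C_2=\phi\,\bar C_1$ for some $\phi\in \mathrm{Frac}(\bar R)$. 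Because every entry is homogeneous of degree at most one, $\phi$ has degree zero.

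The main case is when $\phi\in K$ is an honest scalar $c$. Then the elementary operation $C_2\mapsto C_2-cC_1$ clears column two modulo $u$, so both entries of the new second column are divisible by $u$; being linear forms they equal $\alpha u$ and $\beta u$ for constants $\alpha,\beta$, and this column $u\,(\alpha,\beta)^T$ has linearly dependent entries, as required. The hard part, and the reason one reduction does not suffice, is the possibility that $\phi\notin K$. Here I would use unique factorization in $\bar R$: from $\det(\bar A)=0$ we get $\bar a_{11}\bar a_{22}=\bar a_{12}\bar a_{21}$, whence the irreducible form $\bar a_{11}$ (which I may assume nonzero after a column swap) divides $\bar a_{12}\bar a_{21}$. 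If it divided $\bar a_{12}$ the ratio $\phi$ would be constant, contrary to assumption; hence $\bar a_{11}\mid\bar a_{21}$, giving $\bar a_{21}=c\,\bar a_{11}$ for some $c\in K$, i.e. $a_{21}=c\,a_{11}+c'u$ in $R$. Substituting into $\det(A)=uv$ yields $a_{11}(a_{22}-c\,a_{12})=u\,(v+c'a_{12})$; since $u\nmid a_{11}$ and $u$ is prime in $R$, this forces $a_{22}=c\,a_{12}+c''u$ for a further constant $c''$. The two relations now conspire so that the tailored column operation $C_2\mapsto c'C_2-c''C_1$ cancels every multiple of $u$ and produces the column $(c'a_{12}-c''a_{11})\,(1,c)^T$, whose entries are linearly dependent; this is an elementary operation when $c'\neq0$, while $c'=0$ means $a_{21}=c\,a_{11}$, so that $C_1$ itself is already proportional. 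This exhausts all cases and establishes the claim.
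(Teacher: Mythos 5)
Your argument is correct, and it reaches the conclusion by a genuinely different route from the paper. The paper works entirely in $R$: it reads $-a_{11}a_{22}+a_{21}a_{12}+uv=0$ as a syzygy, splits according to whether $u$ lies in the span of $a_{11},a_{21}$, and in each case invokes the fact that the first syzygy module of a regular sequence of linear forms is generated by the Koszul relations to solve explicitly for $a_{12},a_{22}$; a single column operation $C_2-\gamma C_1$ then exhibits a column proportional to $u$ (first case) or to $v$ (second case). You instead pass to $\bar R=R/\langle u\rangle$, where $\det(\bar A)=0$ forces $\bar C_2=\phi\,\bar C_1$ with $\phi$ homogeneous of degree zero, and split on whether $\phi$ is a scalar: if so, $C_2-cC_1$ is divisible entrywise by $u$; if not, irreducibility of linear forms and primality of $u$ give $a_{21}=ca_{11}+c'u$, $a_{22}=ca_{12}+c''u$, and the operation $C_2\mapsto c'C_2-c''C_1$ produces the column $(c'a_{12}-c''a_{11})(1,c)^T$ (with the case $c'=0$ handled separately, as you note). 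Your dichotomy (constant versus non-constant ratio modulo $u$) is not the same as the paper's (whether $u\in\mathrm{span}(a_{11},a_{21})$), and your tools --- reduction modulo a prime linear form plus unique factorization --- are more elementary than the Koszul-syzygy machinery, at the cost of a slightly longer bookkeeping of degenerate configurations (where your treatment of a row vanishing modulo $u$ is worded loosely: the resulting column has a zero entry rather than being a constant multiple of $u$, though the conclusion of linear dependence still holds). The paper's version has the advantage of fitting the syzygy-theoretic style used throughout Section 2, where the same Koszul argument recurs in Lemma \ref{case-(n+2)x(n)} and Proposition \ref{case3x2}.
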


\begin{proof} We can assume that $ a_{11}, a_{21} $ are linearly independent. The assumption $ \det(A)= uv $ can be rewritten
as $ -a_{11} a_{22} + a_{21} a_{12} + uv =0.$ First assume that  $ a_{11}, a_{21}, u $ are linearly independent.

Then $$ \left( \begin{array}{c} a_{22} \\ a_{12} \\ v \end{array} \right) = \left( \begin{array}{ccc} 0 & u & a_{21} \\ -u & 0 & a_{11} \\ a_{21} & a_{11} & 0 \end{array} \right) \left( \begin{array}{c} x_1 \\ x_2 \\ x_3 \end{array} \right) $$ for suitable constants $ x_1, x_2, x_3$, because, in this case,
the first syzygy module of $ -a_{11}, a_{21}, u $ is generated by the Koszul relations. Hence, $ a_{22} = x_2 u + x_3 a_{21} $ and $ a_{12} = -x_1 u + x_3 a_{11}$, that is to say, $$ A = \left( \begin{array}{cc} a_{11} & -x_1 u \\ a_{21} & x_2 u \end{array} \right) $$ after performing the elementary operation $ C_2 - x_3 C_1 $ on the second column of $ A$.

Now assume instead that $ u = \alpha a_{11} + \beta a_{21}, $ for suitable constants $ \alpha, \beta$.
The assumption $ \det(A)= uv $ can be rewritten as $ (-a_{22} + \alpha v) a_{11} + (a_{12} + \beta v) a_{21} = 0$.
Hence, there exists a constant $ \gamma $ such that $ a_{22} = \alpha v + \gamma a_{21}, a_{12} = -\beta v + \gamma a_{11} $
because the syzygies of $ a_{11}, a_{21} $ are again generated exclusively by the Koszul relations. Hence, after performing
the elementary operation $ C_2 - \gamma C_1 $ on the second column of $ A $, the elements (linear forms) of the second column of $ A $
are linearly dependent, as in the previous case.
\end{proof}

\begin{proposition} \label{case3x2}
Let $N$ be a $3 \times 2$ matrix of linear forms, whose maximal
minors have a greatest common divisor of degree $1$. Then, up to
elementary operations on rows and columns, it is:
\begin{equation} \label{3-2-form} N = \left( \begin{array}{cc} 0 & n_{12} \\
0 & n_{22} \\ n_{31} & n_{32} \end{array} \right) \qquad \mbox{or}
\qquad N = \left(
\begin{array}{cc} 0 & n_{12} \\ n_{12} & 0 \\ n_{31} & n_{32}
\end{array} \right) \end{equation}.
\end{proposition}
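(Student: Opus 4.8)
The plan is to read off, from the maximal minors themselves, the syzygy that governs the structure of $N$. Writing $m_i=\det N_{\widehat{i}}$ for the three maximal minors, the Hilbert--Burch (Cramer) identity shows that the signed vector $v=(m_1,-m_2,m_3)^T$ satisfies $N^T v=0$, i.e. $\sum_i v_i n_{ij}=0$ for $j=1,2$. Since the common factor $\ell$ of degree $1$ divides each $m_i$, I write $v=\ell\, e$ with $e=(e_1,e_2,e_3)^T$ a vector of linear forms having $\gcd(e_1,e_2,e_3)=1$; as $R$ is a domain, $N^T e=0$, so that each column of $N$ is a linear syzygy of $(e_1,e_2,e_3)$. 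A direct check shows that elementary row and column operations preserve the common factor of the minors; moreover row operations act on $e$ through the full linear group (contragrediently), while column operations fix $e$ and merely recombine the two syzygies. The first point to record is that $s:=\dim_K\langle e_1,e_2,e_3\rangle\in\{2,3\}$: were $s=1$, all $e_i$ would be proportional to a single linear form, forcing a common factor and contradicting $\gcd(e_1,e_2,e_3)=1$ (the case $v=0$ is excluded, since then all minors vanish and their g.c.d. has no degree).

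When $s=2$, a constant row operation lets me assume $e_3=0$ with $e_1,e_2$ linearly independent. The constraint becomes $e_1 n_{1j}+e_2 n_{2j}=0$, and since $e_1,e_2$ form a regular sequence, their module of linear syzygies is generated by the single Koszul relation $(e_2,-e_1)$; hence $(n_{1j},n_{2j})=c_j(e_2,-e_1)$ for constants $c_j$, while the third entries $n_{3j}$ are unconstrained. Thus the two columns agree, in their top two entries, up to a scalar: a single column operation clears the top two entries of one column, and after a column swap one reads off exactly the first matrix of \brref{3-2-form}, the independence of $e_1,e_2$ guaranteeing that its upper $2\times 2$ block has a column of independent forms. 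This is the global counterpart of Lemma \ref{case-2x2} applied to $N_{\widehat{3}}$.

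When $s=3$ the forms $e_1,e_2,e_3$ are independent, so the linear syzygies of the regular sequence $(e_1,e_2,e_3)$ are spanned by the three Koszul relations $(e_2,-e_1,0)$, $(e_3,0,-e_1)$, $(0,e_3,-e_2)$. Each column of $N$ is therefore $\Phi(t_j)$, where $t_j\in K^3$ is a constant coefficient vector and $\Phi$ denotes the Koszul map; the vectors $t_1,t_2$ are independent (otherwise all minors vanish) and span a $2$-plane $\Pi\subset K^3$. Column operations replace $(t_1,t_2)$ by another basis of $\Pi$, whereas row operations move $\Pi$ through the exterior-square (adjugate) representation of the linear group. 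The main obstacle is precisely this last step: I must verify that the induced action is transitive on $2$-planes of $K^3$ (equivalently, on lines in $\wedge^2 K^3\cong K^3$), so that $\Pi$ can be carried to the coordinate plane $\{\alpha=0\}$ equipped with the distinguished basis $(0,0,1),(0,1,0)$. Evaluating $\Phi$ on that basis returns the second matrix of \brref{3-2-form}, with the repeated entry a constant multiple of $\ell$, the hypothesis $\gcd(e_1,e_2,e_3)=1$ turning into the requirement that the surviving forms be as independent as possible. It then only remains to note that the two cases $s=2$ and $s=3$ produce respectively the two normal forms, and that the degenerate situations discarded en route (a vanishing column, or all minors zero) are incompatible with the maximal minors having greatest common divisor of degree exactly one.
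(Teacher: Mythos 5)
Your proof is correct, but it is organized around a different dichotomy than the paper's. The paper splits into cases according to the span of the entries of the first \emph{column} $C$ of $N$: when $C$ has three independent entries it invokes Lemma \ref{case-(n+2)x(n)} (hence Hilbert--Burch) to identify the syzygy matrix of $P$ as $(X^T|C)$ and read off the first normal form, and when $C$ spans a $2$-dimensional space it normalizes the matrix by hand and runs a divisibility analysis on the three explicit minors (in the spirit of Lemma \ref{case-2x2}) to reach the second form. You instead split on $s=\dim_K\langle e_1,e_2,e_3\rangle$, the span of the \emph{reduced minor vector} $e=v/\ell$ --- a dual dichotomy (indeed $s=2$ corresponds exactly to the first matrix of \brref{3-2-form} and $s=3$ to the second) --- and in each case you need only the Laplace identity $N^Tv=0$, the Koszul description of the linear syzygies of a regular sequence of independent linear forms, and, for $s=3$, the transitivity of the induced action of $GL_3$ on $2$-planes of $K^3$; the latter is the one step you flag but do not carry out, and it is a one-line check from $\wedge^2 U=\det(U)\,(U^{-1})^T$, so record it. What your route buys is uniformity and economy: both normal forms fall out of a single syzygy computation, the skew-symmetric (alternating) parametrization of the columns in the $s=3$ case makes the shape of the second matrix, including the identification of the repeated entry with $\ell$, transparent, and no case analysis on divisibility of products of linear forms is needed. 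What the paper's route buys is reuse: Lemma \ref{case-(n+2)x(n)} is stated for general $(n+2)\times n$ matrices and is applied again in the proof of Theorem \ref{4x3-deg1}, whereas your Koszul/exterior-square normalization is tailored to the $3\times 2$ case.
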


\begin{proof}

As the maximal minors of $N$ have a greatest common divisor of
degree $1$, there exists a $ 1 \times 3 $ matrix $ P $ of linear
forms such that $ P N = 0$. Of course, both columns of $ N $ are
syzygies of $ P$. Let $C$ be the first column of $ N.$

First assume that the linear forms in $C$ are linearly
independent. Then Lemma \ref{case-(n+2)x(n)} applied  to $ C $
implies that there exists $ X $ such that $ P = X D,$ and the syzygy matrix of $P$ is $(X^T | C).$
 The second column of $ N $ is then $ \alpha X^T +
\beta C $ where $ \alpha $ is now a linear form and $ \beta $ is a
constant. Therefore, up to elementary operations on rows and columns of
the matrix, $N$ is as in (\ref{3-2-form}), case on the left.

One can now assume that the linear forms in the first column $C$ span a
subspace of dimension $ 2.$
Then one can reduce $ N $ to the form
$$ N = \left( \begin{array}{cc} 0 & n_{12} \\ n_{21} & n_{22} \\ n_{31} & n_{32} \end{array} \right),$$
with $ n_{21}$ and $ n_{31} $ linearly independent, and $ n_{12} \not=
0$, to avoid trivial cases.

Note that we can also assume that the
linear forms in the second
column of $N$ span a space of dimension $2,$ otherwise $N$ would again be as in (\ref{3-2-form}), case on the left. Because, moreover, $n_{12} \not= 0$, we have that $ n_{22}, n_{32} $ are linearly
dependent modulo $ n_{12}$, and so we can further reduce $ N $ by
assuming that $ n_{22}=0.$ The maximal minors of $ N $ are $
n_{21} n_{32}, n_{12} n_{31}, -n_{21} n_{12}.$ Then, $ n_{12} $
divides $ n_{21} n_{32}$. We know that $ n_{12} $ and $ n_{32} $
are linearly independent, and so, $ n_{12} $ divides $ n_{21}$.
Hence $N$ is as in (\ref{3-2-form}), case on the right.
\end{proof}
\bigskip

\begin{theorem} \label{4x3-deg1}
Let $ N $ be a $ 4 \times 3 $ matrix of linear forms whose maximal
minors have a greatest common divisor of degree $1$. Then, up to
elementary operations on its rows and columns, $ N $ has one
of the following forms, where $\alpha$ and $\beta$ are suitable constants, $(\alpha, \beta) \neq (0,0)$:

\begin{equation} \label{4x3-1} N_A =
\left(
\begin{array}{ccc} n_{11} & n_{12} & 0 \\ n_{21} & n_{22} & 0 \\
n_{31} & n_{32} & 0
\\ n_{41} & n_{42} & n_{43} \end{array} \right),
\qquad
N_B = \left( \begin{array}{ccc} 0 & 0 & n_{13} \\ 0 & n_{13} & 0 \\
n_{31} & n_{32} & n_{33} \\ n_{41} & n_{42} & n_{43}
\end{array} \right), \end{equation}

\begin{equation} \label{4x3-2} N_C = \left( \begin{array}{ccc} 0 & 0 & \alpha n_{31} + \beta n_{22} \\
0 & n_{22} & \alpha n_{41} \\ n_{31} & 0 & \beta n_{42}
\\ n_{41} & n_{42} & 0 \end{array} \right), \qquad  N_D = \left(
\begin{array}{ccc} 0 & 0 & n_{13} \\ 0 & n_{31} & n_{23} \\ n_{31}
& 0 & n_{33} \\ n_{41} & n_{42} & n_{43}
\end{array} \right). \end{equation}
\end{theorem}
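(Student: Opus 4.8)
The plan is to mirror the strategy of Proposition \ref{case3x2}, with the linear syzygy used there replaced by a quadratic one. Since the four maximal minors $m_1,\dots,m_4$ of $N$ share a common linear factor $\ell$, write $m_i=\ell q_i$ with $q_1,\dots,q_4$ sharing no common factor. The signed vector of maximal minors is a left syzygy of $N$: appending the $j$-th column of $N$ to $N$ gives a $4\times 4$ matrix with a repeated column, so its determinant vanishes, and Laplace expansion along the appended column yields $\sum_i(-1)^i m_i N_{ij}=0$ for each $j$. Dividing out $\ell$ produces a nonzero $1\times 4$ row $P$ of quadratic forms, with $P_i=(-1)^iq_i$, satisfying $PN=0$; consequently $PM=0$ for every $4\times 2$ submatrix $M$ formed by a pair of columns of $N$. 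This is exactly the input required by Lemma \ref{case-(n+2)x(n)} in the case $n=2$.

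First I would treat the case in which some pair of columns forms a $4\times 2$ matrix $M$ whose $3\times 2$ submatrices do not drop rank in codimension one. Then Lemma \ref{case-(n+2)x(n)} applies: the entries of $P$ are linearly dependent, and $\widetilde{M}=(X^T\,|\,M)$ is a syzygy matrix of $P$, so its columns generate the module of syzygies of $P$. The remaining (third) column of $N$ is itself a syzygy of $P$, hence equal to $w_0 X^T+w_1C_1+w_2C_2$, where $C_1,C_2$ are the columns of $M$; matching degrees forces $w_0$ linear and $w_1,w_2$ constant, since $X^T$ is a constant vector while $C_1,C_2$ and the third column are linear. After the column operation removing $w_1C_1+w_2C_2$ and row operations normalizing $X^T$ to $(0,0,0,1)^T$, this column becomes $(0,0,0,w_0)^T$, and $N$ is in the form $N_A$ of (\ref{4x3-1}); here $w_0\neq 0$, as otherwise all minors would vanish.

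The substantial case is the complementary one, in which no such pair of columns exists, i.e. every $4\times 2$ submatrix of $N$ contains a $3\times 2$ submatrix that drops rank in codimension one. Here I would invoke Proposition \ref{case3x2} to normalize each degenerate $3\times 2$ block to one of the two shapes of (\ref{3-2-form}), and Lemma \ref{case-2x2} to control the $2\times 2$ blocks whose determinant factors. The forms $N_B$, $N_C$, $N_D$ of (\ref{4x3-1})--(\ref{4x3-2}) should emerge according to how these local degeneracies overlap: the repeated entry in $N_B$ and $N_D$ reflects the antisymmetric right-hand normal form of (\ref{3-2-form}), while the constants $\alpha,\beta$ in $N_C$ record the residual constant freedom in the situation where the common factor $\ell=\alpha n_{31}+\beta n_{22}$ itself occupies the $(1,3)$ entry.

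The main obstacle is precisely this last case. Because only constant row and column operations are permitted, so as to keep all entries linear, the local normal forms supplied by Proposition \ref{case3x2} for the various degenerate $3\times 2$ submatrices cannot be imposed independently; one must patch them into a single global canonical form while respecting the constraints imposed by $PN=0$. Organizing the resulting configurations without omission or overlap, and showing that the shared entries and the parameters $\alpha,\beta$ are genuinely forced so that $N_B$, $N_C$, $N_D$ are both distinct and exhaustive, is where the real work of the proof lies.
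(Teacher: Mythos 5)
Your setup and your first case are sound and coincide with the paper's own argument: the signed minors divided by the common linear factor give a row $P$ of quadratic forms with $PN=0$, and when some pair of columns forms a $4\times 2$ block whose $3\times 2$ submatrices do not drop rank in codimension one, Lemma \ref{case-(n+2)x(n)} identifies the syzygy module of $P$ as generated by $(X^T\,|\,C_1\,|\,C_2)$ and forces the third column into the shape $N_A$. Up to that point the argument is complete and matches the paper.

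The genuine gap is everything after that, and it is most of the theorem. In the complementary case the paper does not merely patch local normal forms: it first shows, via Proposition \ref{case3x2} applied to each pair of columns together with the standing convention that each column spans a space of dimension at least two, that $N$ can be put into one of six explicit intermediate shapes $N_1,\dots,N_6$, and then analyzes the maximal minors of each shape separately. Several of these collapse to specializations of $N_A$ or $N_B$ by divisibility arguments and Lemma \ref{case-2x2}, but the last shape $N_6$ requires computing the syzygy module of $\langle n_{31}n_{42}, n_{22}n_{41}, n_{22}n_{31}, n_{13}\rangle$ by a mapping cone, with four subcases according to whether $n_{13}$ is regular for $I=\langle n_{31}n_{42}, n_{22}n_{41}, n_{22}n_{31}\rangle$ or lies in one of the three ideals $\langle n_{31},n_{41}\rangle$, $\langle n_{22},n_{42}\rangle$, $\langle n_{22},n_{31}\rangle$ whose intersection is $I$. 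The form $N_C$, with its parameters $\alpha,\beta$, arises only in the subcase $n_{13}=\alpha n_{31}+\beta n_{22}$, and there is no way to see that it is needed, or that the list $N_A,\dots,N_D$ is exhaustive, without carrying out this enumeration and these computations. Your proposal correctly locates this as ``where the real work of the proof lies,'' but it does not do that work; as written it establishes only the $N_A$ case and leaves the derivation of $N_B$, $N_C$, $N_D$ and the proof of exhaustiveness entirely open.
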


\begin{proof} Let $ N $ be a $ 4 \times 3 $ matrix of linear forms, whose
maximal minors have a greatest common divisor of degree $ 1$.
Then, up to the common linear factor, the maximal minors of $ N $ are $ 4
$ quadratic forms. Let $ P $ be the row vector whose entries are these quadratic forms, in the right order. Then $ P N = 0$, which shows that
$ N $ is a submodule of the syzygies of $P$.

Let us first assume that there exist two columns of $ N, $ say the first two $C_1$ and $C_2$,
with the property that every $ 3 \times 2 $ submatrix of $N(1,2) = (C_1|C_2)$ has maximal
minors with trivial greatest common divisor.
Then, as in the proof of  Lemma \ref{case-(n+2)x(n)},
it is $ P = X D $ for a suitable $ X $ of type $ 1 \times 4 ,$ and
the syzygy matrix of $ P $ is equal to $
(X^T | C_1 | C_2)$. Hence, the third column of $
N $ is $ C_3 = \delta X^T + \beta_1 C_1 + \beta_2 C_2 $ where $
\delta $ is a suitable linear form and $ \beta_1, \beta_2 $ are
constants. Thus, up to elementary operations on rows and columns, it is $N=N_A$ in (\ref{4x3-1}).
 Notice that if the linear
forms of any column of $ N $ span a $1$ dimensional subspace, then
the maximal minors of $N$ have a common factor of degree $1$ and
$N$ can be reduced as above. Therefore from now on we will assume
that the linear spaces spanned by the linear forms of each column
have dimension at least two.

From the above discussion, we can now assume that every pair of columns of $N$
has a $ 3 \times 2 $ submatrix whose maximal minors have
a greatest common divisor of degree $1.$ Proposition \ref{case3x2}, up to
elementary operations on rows and columns, gives the following possibilities for $N:$
$$ N_1 = \left( \begin{array}{ccc} 0 & 0 & n_{13} \\ 0 & 0 & n_{23} \\ n_{31} & n_{32} & n_{33} \\ n_{41} & n_{42} & n_{43} \end{array} \right),
N_2 = \left( \begin{array}{ccc} 0 & 0 & n_{13} \\ 0 & n_{13} & 0 \\ n_{31} & n_{32} & n_{33} \\ n_{41} & n_{42} & n_{43} \end{array} \right),
N_3 = \left( \begin{array}{ccc} 0 & 0 & n_{13} \\ 0 & n_{22} & n_{23} \\ n_{31} & n_{13} & 0 \\ n_{41} & n_{42} & n_{43} \end{array} \right),$$
$$ N_4 = \left( \begin{array}{ccc} 0 & n_{12} & n_{13} \\ 0 & n_{22} & n_{23} \\ n_{31} & n_{32} & 0 \\ n_{41} & 0 & n_{32} \end{array} \right),
N_5 = \left( \begin{array}{ccc} 0 & n_{12} & n_{31} \\ n_{12} & 0 & n_{32} \\ n_{31} & n_{32} & 0 \\ n_{41} & n_{42} & n_{43} \end{array} \right), N_6 = \left( \begin{array}{ccc} 0 & 0 & n_{13} \\ 0 & n_{22} & n_{23} \\ n_{31} & 0 & n_{33} \\ n_{41} & n_{42} & n_{43} \end{array} \right).$$

Let us consider $ N_1$. If $ n_{13} $ and $ n_{23} $ are linearly
dependent, then $ N_1 ,$ up to elementary
operations, has a row of zeros. Therefore we can assume that $ n_{13}$ and $n_{23}$ are
linearly independent, and so $ n_{31} n_{42} - n_{32} n_{41} $ is
reducible. Then, Lemma \ref{case-2x2}, up to elementary
operations, implies that the forms of the first or of the second
column of $ N_1 $ span a space of dimension $ 1$, and hence $ N_1
$ is a specialization of $ N_A$.

It is immediate to see that $ N_2 = N_B,$ and that its maximal minors have $n_{13}$ as a common factor.

Now we consider $N_3$. Its maximal minors have $n_{13}$ as
common factor if $n_{13}$ divides either $n_{31}$ or
$n_{22}n_{43}-n_{23}n_{42}.$ In the first case $N_3$ can be
reduced to $N_B$ with $n_{32}=0.$ In the second case, by
Lemma \ref{case-2x2}, $N_3$ can be reduced to the form
$$ \left( \begin{array}{ccc} 0 & 0 & n_{13} \\ 0 & n_{22} &
n_{23} \\ n_{31} & n_{13} & 0 \\ n_{41} &0 & 0
\end{array} \right)$$ that is a specialization of $ N_D$.

The maximal minors of $ N_4$, up to a sign, are $$ n_{32} \left(n_{41}n_{23} + n_{22}n_{31}\right), n_{32}\left(n_{41}n_{13}+n_{12}n_{31}\right), n_{41}\left(n_{12}n_{23}-n_{13}n_{22}\right), n_{31}\left(n_{12}n_{23}-n_{13}n_{22}\right).$$ As $ n_{31}$ and $ n_{41} $ are linearly independent, then either $ n_{32} $ divides $ n_{12}n_{23}-n_{13}n_{22}$, or $ n_{31} $ divides both $ n_{13} $ and $ n_{23}$, or $ n_{41} $ divides both $ n_{12} $ and $ n_{22}$, or, finally, $ n_{41}n_{23}+n_{22}n_{31}, n_{41}n_{13}+n_{12}n_{32},$ and $ n_{12}n_{23}-n_{13}n_{22} $ have a common linear factor. In the first case, a column of the submatrix formed by the first two rows and columns of $ N_4 $ contains forms that are multiple of $ n_{32}$, and so a column of $ N_4 $ spans a linear space of dimension $ 1$. Hence, we get a special case of $ N_A$. In the second case it is easy to check that we get a specialization of $ N_D$. The third case is identical to the previous one, swapping the role of $n_{31}$ and $n_{41}.$ In the last case, the maximal minors of the matrix $$ \left( \begin{array}{cc} n_{12} & n_{13} \\ n_{22} & n_{23} \\ -n_{41} & n_{31} \end{array} \right) $$ have a common linear form. By Proposition \ref{case3x2}, we get a specialization of either $ N_A $ or $ N_B$.

The maximal minors of $ N_5$, up to the sign, are $$ 2n_{12}n_{31}n_{32}, n_{12}\left(n_{32}n_{41}+n_{31}n_{42}-n_{12}n_{43}\right), n_{31}\left(-n_{32}n_{41}+n_{31}n_{42}-n_{12}n_{43}\right),$$ and $$ n_{32}\left(-n_{32}n_{41}+n_{31}n_{42}+n_{12}n_{43}\right).$$  Assume first that $ n_{12} $ is the common factor among them. Then, $ n_{12} $ divides $ -n_{32}n_{41}+n_{31}n_{42}$, that forces $ N_5 $ to be a specialization of $ N_A$. The cases when $n_{31}$ or $n_{32}$ is the common factor are dealt with similarly.

The last remaining case, $ N_6,$ requires a slightly more elaborate analysis.
The
maximal minors of $ N_6$, up to sign, are \begin{equation}
\label{lista}
n_{13}n_{31}n_{42}, n_{13}n_{22}n_{41}, n_{13}n_{22}n_{31},
n_{23}n_{31}n_{42}+n_{33}n_{22}n_{41}-n_{43}n_{22}n_{31}.
\end{equation} We
remark that $ n_{31}, n_{41}$ and $ n_{22}, n_{42} $ are pairwise
linearly independent. The maximal minors of $ N_6 $ have a common
linear form if one of the following condition holds:
\begin{itemize}
\item[(1)] $ n_{31} $ divides both $ n_{13} $ and $ n_{33};$
\item[(2)] $n_{31} $ divides $ n_{22};$
\item[(3)] $ n_{42} $ divides both $ n_{13} $ and $n_{33}n_{41}-n_{31}n_{43};$
\item[(4)] $ n_{13} $ divides $n_{23}n_{31}n_{42}+n_{33}n_{22}n_{41}-n_{43}n_{22}n_{31}.$
\end{itemize}
The cases above follow from the analysis of the maximal minor $n_{13}n_{31}n_{42}.$ An analogous list could be obtained starting from the second or third maximal minor in \brref{lista}.

In case $(1)$, $ N_6 $ is a specialization of $ N_B$.
In cases $(2)$ and $(3)$, $N$ is either $N_D$ or one of its specializations.
The rest of this proof is devoted to case $(4).$
First notice that we can assume that the submatrix obtained from the last $ 3 $ rows and first two
columns of $ N_6 $ satisfies the hypothesis of the Hilbert--Burch
Theorem. Otherwise we would fall back on case $(2).$
Case (4) can be rewritten as
\begin{equation}\label{eqq}
n_{23}\left(n_{31}n_{42}\right)+n_{33}\left(n_{22}n_{41}\right)-n_{43}\left(n_{22}n_{31}\right)
+ n_{13} q = 0
\end{equation} for a suitable quadratic form $ q$. Let $ I = \langle n_{31}n_{42}, n_{22}n_{41},
n_{22}n_{31}\rangle$, and $ J =\langle n_{13}\rangle$. Then $ I + J $ is generated by all the previous generators, and
\brref{eqq} gives that the transpose of $ (n_{23},
n_{33},-n_{43}, q) $ is a syzygy of the generators of $ I+J$.

\noindent Case $(4.1)$: $ n_{13} $ is regular for $ I$.

Then
 the syzygy matrix of $ I+J $ can be
computed by mapping cone procedure, and it is $$ M_1 = \left(
\begin{array}{ccccc} -n_{13} & 0 & 0 & 0 & n_{22} \\ 0 & -n_{13} &
0 & n_{31} & 0 \\ 0 & 0 & -n_{13} & -n_{41} & -n_{42} \\
n_{31}n_{42} & n_{22}n_{41} & n_{22}n_{31} & 0 & 0 \end{array}
\right).$$ Hence, there exist constants $ x_1, \dots, x_5 $ such
that $$ \left( \begin{array}{c} n_{23} \\ n_{33} \\ -n_{43} \\ q
\end{array} \right) = M_1 \left( \begin{array}{c} x_1 \\ \vdots \\
x_5 \end{array} \right)$$ that is to say $ n_{23} =
-x_1n_{13}+x_5n_{22}, n_{33} = -x_2n_{13}+x_4n_{31}, n_{43} =
x_3n_{13}+x_4n_{41}+x_5n_{42}$. By substituting in $ N_6 $ and
performing suitable elementary operations, we get a matrix that is
a specialization of $ N_A$.

We remark that $ I = \langle n_{31}, n_{41} \rangle \cap \langle
n_{22}, n_{42} \rangle \cap \langle n_{22}, n_{31} \rangle.$ So,
if $ n_{13} $ is not regular for $ I$, then it has to belong to
one of the three ideals whose intersection is $ I$.

\noindent Case $(4.2)$: $ n_{13} = \alpha n_{31} + \beta n_{41} $
with $ (\alpha, \beta) \not= (0,0)$.

The syzygy matrix of $ I+J$, in this case, is $$ M_2 = \left(
\begin{array}{cccc} 0 & 0 & n_{22} & -n_{13} \\ -\beta & n_{31} &
0 & 0 \\ -\alpha & -n_{41} & -n_{42} & 0 \\ n_{22} & 0 & 0 &
n_{31}n_{42} \end{array} \right).$$ Hence, there exists a linear
form $ v $ and constants $ x_2, x_3, x_4 $ such that $$ \left(
\begin{array}{c} n_{23} \\ n_{33} \\ -n_{43} \\ q
\end{array} \right) = M_2 \left( \begin{array}{c} v \\ x_2 \\
x_3 \\ x_4 \end{array} \right)$$ and so $ n_{23} = x_3n_{22} -
x_4n_{13}, n_{33} = -\beta v + x_2 n_{31}, n_{43} = \alpha v + x_2
n_{41} + x_3 n_{42}$. By substituting in $ N_6 $ and performing
suitable elementary operations, we get a matrix that is a
specialization of $ N_D$.

\noindent Case $(4.3)$: $ n_{13} = \alpha n_{22} + \beta n_{42} $
with $ (\alpha, \beta) \not= (0,0)$.

This case is completely analogous to $ (4.2)$ and leads to a specialization of $N_D.$

\noindent Case $(4.4)$: $ n_{13} = \alpha n_{31} + \beta n_{22} $
with $ (\alpha, \beta) \not= (0,0)$.

The syzygy matrix of $ I+J$, in this case, is $$ M_3 = \left(
\begin{array}{cccccc} 0 & n_{22} & n_{13} & 0 & 0 & \alpha n_{41}
\\ n_{31} & 0 & 0 & n_{13} & 0 & \beta n_{42} \\ -n_{41} & -n_{42}
& 0 & 0 & n_{13} & 0 \\ 0 & 0 & -n_{31}n_{42} & -n_{22}n_{41} &
-n_{22}n_{31} & -n_{41}n_{42} \end{array} \right).$$ There exist
constants $ x_1, \dots, x_6 $ such that, as before, $ n_{23} =
x_2n_{22}+x_3n_{13}+\alpha x_6 n_{41}, n_{33} =
x_1n_{31}+x_4n_{13}+\beta x_6 n_{42}, n_{43} = x_1 n_{41} + x_2
n_{42} - x_5 n_{13}$. By substituting in $ N_6 $ and performing
suitable elementary operations, we get the matrix $ N_C.$
\end{proof}

\begin{remark} Each of the matrices $ N_A, N_B, N_C, N_D $ is not a specialization of any of the others. In fact, in general cases,the  columns of each matrix span a space of maximal dimension, and such dimensions are uniquely associated to each matrix.
\end{remark}

\begin{remark} Note that each general matrix $ N_A, N_B, N_C, N_D $ can be specialized to a specializations of the remaining three. This statement can be interpreted from a different point of view. The set of $ 4 \times 3 $ matrices of linear forms over a given polynomial ring is an affine space, or a projective space, if we exclude the null matrix, and so it is irreducible. The general element of such a space corresponds to a matrix whose maximal minors have a trivial greatest common divisor. The complement of the open set of such matrices is a union of algebraic sets. The statement above shows that the union of the loci whose general elements are $ N_A, \dots, N_D$, respectively, is connected.
\end{remark}

In the next section, where we study the geometry of the degeneration loci of the above matrices, more evidence for the fact that we have four irreducible algebraic sets, one for each matrix $ N_A, \dots, N_D$ will emerge.

\subsection{Matrices with maximal minors with a common factor of degree $2$}
\label{deg2}

In this subsection, we classify the $ 4 \times 3 $ matrices whose maximal minors have a greatest common
divisor of degree $ 2$.

Let $N$ be a $4 \times 3$ matrix whose maximal minors have a greatest common divisor $q$ of degree $2.$ Let $D_i=q l_i$ for suitable linear forms
$l_i$, $i=1, \dots,4,$ be the minor, with its proper sign, obtained by removing the $i$-th row from $N.$
As $$q (l_1 , l_2 , l_3 , l_4 )N=0,$$ the matrix $N$ is a submodule of the syzygy module
of $(l_1 , l_2 , l_3 , l_4 ).$ The linear forms $l_i$ can span a subspace of dimension $4, 3$, or $2$. In the first case, they are linearly independent, in the second, we assume $l_1+z_2l_2+z_3l_3+z_4l_4=0$ for suitable scalars $z_j$, while in the last case we assume $l_1+z_{31}l_3+z_{41}l_4 = l_2+z_{32}l_3+z_{42}l_4 = 0$ for suitable scalars $z_{ij}.$ Then the syzygy modules, respectively, are generated by the columns of the following matrices:
\begin{equation}
\label{leesse}
 \scriptstyle{S_1=}\left( \begin{array}{cccccc}
\scriptstyle{0} & \scriptstyle{-l_3} & \scriptstyle{l_2} & \scriptstyle{0} & \scriptstyle{0} & \scriptstyle{l_4} \\
\scriptstyle{l_3} & \scriptstyle{0} & \scriptstyle{-l_1} & \scriptstyle{0} & \scriptstyle{l_4} & \scriptstyle{0} \\
\scriptstyle{-l_2} & \scriptstyle{l_1} & \scriptstyle{0} & \scriptstyle{l_4} & \scriptstyle{0} & \scriptstyle{0} \\
\scriptstyle{0} & \scriptstyle{0} & \scriptstyle{0} & \scriptstyle{-l_3} & \scriptstyle{-l_2} & \scriptstyle{-l_1}
\end{array} \right), \scriptstyle{S_2=} \left( \begin{array}{cccc}
\scriptstyle{1} & \scriptstyle{0} & \scriptstyle{0} & \scriptstyle{0} \\
\scriptstyle{z_2} & \scriptstyle{0} & \scriptstyle{-l_4} & \scriptstyle{l_3} \\
\scriptstyle{z_3} & \scriptstyle{l_4} & \scriptstyle{0} & \scriptstyle{-l_2} \\
\scriptstyle{z_4} & \scriptstyle{-l_3} & \scriptstyle{l_2} & \scriptstyle{0}
\end{array} \right),
\end{equation}
\begin{equation*} \scriptstyle{S_3=} \left( \begin{array}{ccc}
\scriptstyle{1} & \scriptstyle{0} & \scriptstyle{0} \\
\scriptstyle{0} & \scriptstyle{1} & \scriptstyle{0} \\
\scriptstyle{z_{31}} & \scriptstyle{z_{32}} & \scriptstyle{-l_4} \\
\scriptstyle{z_{41}} & \scriptstyle{z_{42}} & \scriptstyle{l_3}
\end{array} \right).
\end{equation*}
Moreover, in the first case there exists a maximal rank $6 \times 3$ matrix $X_1,$ with scalar entries, such that $N=S_1 X_1$; in the second case there exists a maximal rank $4 \times 3$ matrix $X_2,$ with linear
forms on the first row and scalars elsewhere, such
that $N=S_2 X_2$; in the last case there exists a generically maximal rank $3 \times 3$ matrix $X_3,$ with linear forms
on the first two rows and scalars on the last row, such that $N=S_3 X_3.$

The discussion above can now be summarized in the following result:
\begin{theorem} \label{4x3-deg2}
Let $ N $ be a $ 4 \times 3 $ matrix of linear forms that drops rank on a degree $ 2 $ hypersurface $ Q$, and let $ L $
be the linear subspace where $ N $ drops rank, residual of $ Q $. Then, $ N = S_i X_i $ where $ i = 5 - \codim(L),$ and $S_i, X_i$ are as described above.
\end{theorem}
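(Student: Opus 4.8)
The plan is to build directly on the preceding discussion, which already yields the factorization $N = S_i X_i$ once one knows the dimension $d$ of the linear subspace spanned by $l_1, \dots, l_4$; the only genuinely new content of the theorem is the geometric reading of this case-split through $\codim(L)$. First I would reconcile the hypotheses with that setup: saying that $N$ drops rank on a degree-$2$ hypersurface $Q$ means the (cubic) maximal minors share a common factor $q$ of degree $2$ with $Q = V(q)$, so that $D_i = q\,l_i$ for linear forms $l_i$. The row vector of signed maximal minors annihilates $N$, since $(D_1,\dots,D_4)N = 0$ is a standard Cramer-type identity (expand a $4\times 4$ determinant with a repeated column along that column). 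Using that $R$ is a domain with $q \neq 0$, cancellation gives $(l_1, l_2, l_3, l_4) N = 0$, so each column of $N$ is a linear syzygy of $(l_1, l_2, l_3, l_4)$.

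Next I would organize the syzygy computation by the dimension $d \in \{2,3,4\}$ of the span of the $l_i$, exactly as in the three cases above. When $d = 4$ the forms are a regular sequence and their syzygies are generated by the six Koszul relations, i.e.\ the columns of $S_1$, all of which have linear entries; hence a linear syzygy of $N$ is a constant combination of them and $X_1$ has scalar entries. When $d = 3$ (resp.\ $d = 2$) each linear relation among the $l_i$ contributes one constant-entry syzygy (the first column, resp.\ first two columns, of $S_2$, resp.\ $S_3$), while the remaining generators are the Koszul relations of a maximal independent subset. The degree bookkeeping then forces the shape of $X_i$: a constant-entry generator may be scaled by a linear form, whereas a linear-entry generator may only be scaled by a constant, which is precisely why $X_2$ carries linear forms on its first row and scalars below, and $X_3$ carries linear forms on its first two rows and scalars below. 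This reproduces the claimed $N = S_i X_i$.

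It then remains to match $d$ with the geometry. The locus on which $N$ drops rank is $V(D_1, \dots, D_4) = V(q\,l_1, \dots, q\,l_4) = V(q) \cup V(l_1,\dots,l_4) = Q \cup L$, so $L = V(l_1, \dots, l_4)$ is exactly the linear space residual to $Q$. Since $L$ is cut out by the forms $l_i$, its codimension equals the number of independent forms among them, i.e.\ $\codim(L) = d$. Hence $i = 5 - d = 5 - \codim(L)$, with the three regimes $d = 4, 3, 2$ corresponding to $i = 1, 2, 3$, as asserted.

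The step I expect to be the real obstacle is not this bookkeeping but the verification, already implicit in the discussion, that the listed $S_i$ generate the \emph{entire} syzygy module in the non--regular--sequence cases, together with pinning down the exact rank hypotheses on $X_i$. One must check that the stated maximal-rank (resp.\ generically maximal-rank) conditions are precisely what guarantees that $N = S_i X_i$ recovers minors whose greatest common divisor has degree exactly $2$ --- neither producing an extra common factor (which would move $N$ into the degree-regime of a different theorem) nor forcing $N$ to drop rank in codimension $\geq 2$. Making these genericity conditions sharp, rather than merely sufficient, is the delicate point.
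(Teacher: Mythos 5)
Your proposal is correct and follows essentially the same route as the paper: the paper's ``proof'' is precisely the discussion preceding the theorem statement, namely writing the signed minors as $D_i = q\,l_i$, cancelling $q$ from $q(l_1,\dots,l_4)N=0$ to see that the columns of $N$ are linear syzygies of $(l_1,\dots,l_4)$, splitting on the dimension of the span of the $l_i$ to identify the syzygy generators (Koszul relations plus one constant syzygy per linear relation) as the columns of $S_i$, and letting degree considerations dictate the shape of $X_i$, with $\codim(L)$ equal to the number of independent $l_i$. The only divergence is your closing worry about sharpness of the rank conditions on $X_i$, which concerns a converse the theorem does not actually assert.
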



\section{Geometry of the degeneration loci}
\label{DegLoc}

We now turn our attention to degeneration loci of matrices, i.e. we study the loci where matrices classified in Section \ref{34mat} drop rank, under some mild generality assumptions and in the dimensional context of interest for our computer vision goals. In this section, we assume  $ R = \mathbb{C}[x_0, \dots, x_4] $ so that  degeneration loci
are algebraic schemes in $ \mathbb{P}^4 = \mbox{Proj}(R)$.

We begin our analysis with matrices $N_A, N_B, N_C, N_D,$ as classified in Theorem \ref{4x3-deg1}.

\begin{proposition} \label{deg-loc-4x3-1}
The degeneration locus of a general $ N_A $ is the union of a hyperplane $H_A$ and a minimal surface $S_A$ of degree $ 3 $ in $ \mathbb{P}^4$.
\end{proposition}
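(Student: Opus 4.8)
The plan is to split the degeneration locus into its two announced pieces by factoring the maximal minors of $N_A$, and then to analyze each piece with elementary determinantal geometry. First I would write down the four maximal $3\times 3$ minors, deleting one row at a time. Deleting the fourth row produces a matrix whose third column is identically zero, so that minor vanishes; deleting any of the first three rows produces a matrix whose last column is $(0,0,n_{43})^T$, and expanding along that column shows the minor equals $\pm\, n_{43}$ times a $2\times 2$ minor of the top block
\begin{equation*}
M=\left(\begin{array}{cc} n_{11} & n_{12} \\ n_{21} & n_{22} \\ n_{31} & n_{32}\end{array}\right).
\end{equation*}
Hence the maximal minors of $N_A$ are, up to sign, $n_{43}m_{12},\, n_{43}m_{13},\, n_{43}m_{23}$ and $0$, where $m_{ij}$ is the minor of $M$ on rows $i,j$; in particular $n_{43}$ is their common linear factor, consistent with Theorem \ref{4x3-deg1}. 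This computation is routine; the content lies in what follows.

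From this factorization the degeneration locus, being the common zero scheme of the maximal minors, is set-theoretically $V(n_{43})\cup V(I_2(M))$. I would set $H_A=V(n_{43})$, a hyperplane in $\Pin{4}$, and $S_A=V(I_2(M))$, the locus where the $3\times 2$ matrix $M$ drops rank. It then remains to show that, for a general $N_A$, $S_A$ is a minimal surface of degree $3$, and that it is a genuine second component, i.e. $S_A\not\subseteq H_A$.

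For the surface I would invoke the Hilbert--Burch machinery. For a general choice of the six linear forms in $M$ the matrix drops rank in codimension $2$ (the generic value for a $3\times 2$ matrix), so $I_2(M)$ is a perfect codimension-two ideal with minimal free resolution
\begin{equation*}
0\to R^2(-3)\stackrel{M}{\longrightarrow} R^3(-2)\to R\to R/I_2(M)\to 0.
\end{equation*}
From this resolution the Hilbert series of $R/I_2(M)$ is $\frac{1-3t^2+2t^3}{(1-t)^5}=\frac{1+2t}{(1-t)^3}$, whence $S_A$ has dimension $2$ and degree $3$. Since $3=\codim S_A+1$ in $\Pin{4}$, $S_A$ is a surface of minimal degree; the del Pezzo--Bertini classification then identifies it, for a general matrix, as a smooth rational normal cubic scroll, and in any case as an irreducible, non-degenerate surface of degree $3$.

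The main obstacle, and the only place where genericity is essential, is guaranteeing that $M$ really drops rank in exact codimension $2$, so that Hilbert--Burch applies and $S_A$ is the expected smooth scroll rather than a degenerate or lower-dimensional locus. Here I would use that the entries of $M$ may be taken as general linear forms in the canonical form: the singular locus of $S_A$, where $M$ has rank $0$, imposes six independent linear conditions and so is empty in $\Pin{4}$, giving smoothness; and since $I_2(M)$ is generated by quadrics, $S_A$ contains no linear relation and is non-degenerate. Finally I would check that $n_{43}\notin I_2(M)$ for general $N_A$, so that $S_A\not\subseteq H_A$ and the two pieces are distinct components, of dimensions $3$ and $2$ respectively. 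Granting these points, the degeneration locus is exactly $H_A\cup S_A$, as claimed.
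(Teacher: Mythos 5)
Your proposal is correct and follows essentially the same route as the paper: factor $n_{43}$ out of the maximal minors, identify the hyperplane $H_A=V(n_{43})$, and recognize the residual component as the codimension-two determinantal locus of the $3\times 2$ block, a minimal surface of degree $3$ (the paper passes to the residual via the colon ideal $I:\langle n_{43}\rangle$ and simply cites the degree-$3$ minimal surface as known, where you supply the Eagon--Northcott/Hilbert-series justification). No gaps; the extra detail you give is a legitimate expansion of the step the paper leaves to the reader.
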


\begin{proof} The defining ideal of the degeneracy locus of such a matrix is
$$ I = \left( n_{43} \left( n_{11} n_{22} - n_{21} n_{12} \right), n_{43} \left( n_{31} n_{12} - n_{11} n_{32} \right), n_{43} \left( n_{21} n_{32} - n_{31} n_{22} \right) \right).$$
Of course, the vanishing locus of $ n_{43} $ is a hyperplane in $ \mathbb{P}^4$.
Furthermore, $ I : \langle n_{43} \rangle $ is the ideal generated by the $ 2 \times 2 $ minors
of $$ \left( \begin{array}{cc} n_{11} & n_{12} \\ n_{21} & n_{22} \\ n_{31} & n_{32} \end{array} \right).$$
It is known that such minors define a minimal surface of degree $ 3 $ in $ \mathbb{P}^4,$ that is to say,
a rational normal scroll in the general case, as well as a cone over a rational normal curve contained in a suitable
hyperplane, or a degeneration of one of the above surfaces.
\end{proof}

\begin{proposition} \label{deg-loc-4x3-2}
The degeneration locus of a general $ N_B $ in $ \mathbb{P}^4 $ is a
hyperplane $ H_B$, and the union of a dimension $ 2 $ linear space $ L_B $ and a
twisted cubic curve $ C_B \subseteq H_B$. Moreover, $ H_B \cap L_B $ is a
line $r_B$ that meets $ C_B $ in two points.
\end{proposition}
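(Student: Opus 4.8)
The plan is to read the components of the degeneration locus directly off the maximal minors of $N_B$ and then to analyse the residual scheme left after removing the common hyperplane. First I would expand the four maximal minors: deleting each of the last two rows yields (up to sign) $n_{13}^2 n_{31}$ and $n_{13}^2 n_{41}$, while deleting one of the first two rows yields $n_{13}Q_1$ and $n_{13}Q_2$, where $$Q_1=n_{31}n_{43}-n_{33}n_{41},\qquad Q_2=n_{31}n_{42}-n_{32}n_{41}.$$ Every minor is divisible by $n_{13}$, so the defining ideal is $I=n_{13}\,I'$ with $I'=\langle Q_1,Q_2,n_{13}n_{31},n_{13}n_{41}\rangle$. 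This exhibits the hyperplane $H_B=\{n_{13}=0\}$ as a component, and since $n_{13}$ is a non--zero--divisor in $R$ the residual scheme to $H_B$ is exactly $V(I')=V(I:n_{13})$.

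Next I would split $V(I')$ according to whether $n_{13}$ vanishes. Where $n_{13}\neq 0$ the relations $n_{13}n_{31}=n_{13}n_{41}=0$ force $n_{31}=n_{41}=0$, and since $Q_1,Q_2\in\langle n_{31},n_{41}\rangle$ the quadrics then vanish automatically; thus the saturation $(I':n_{13}^{\infty})$ equals $\langle n_{31},n_{41}\rangle$, the ideal of a $2$--dimensional linear space $L_B\subset\mathbb{P}^4$. Inside $H_B$ the residual scheme is $V(Q_1,Q_2)$, and here I would observe that $Q_1,Q_2$ are two of the three $2\times 2$ minors of $$\left(\begin{array}{ccc} n_{31} & n_{32} & n_{33} \\ n_{41} & n_{42} & n_{43}\end{array}\right),$$ whose entries restrict to general linear forms on $H_B\cong\mathbb{P}^3$.

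The geometric core is the decomposition of this complete intersection. For general $N_B$ the quadrics $Q_1,Q_2$ form a regular sequence, so $V(Q_1,Q_2)$ is a curve of degree $4$ in $H_B$. At a point where the column $(n_{31},n_{41})$ is nonzero, the vanishing of $Q_1$ and $Q_2$ forces the other two columns to be proportional to it, so the rank drops to one and the third minor $M_{23}=n_{32}n_{43}-n_{33}n_{42}$ vanishes as well; where that column is zero one lands on the line $r_B=\{n_{31}=n_{41}=0\}$. This gives the splitting $V(Q_1,Q_2)=r_B\cup C_B$, where $C_B$ is the rank $\le 1$ locus of the $2\times 3$ matrix. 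I would then invoke the classical fact that the determinantal locus of a general $2\times 3$ matrix of linear forms on $\mathbb{P}^3$ is a smooth rational normal cubic, so $C_B$ is a twisted cubic of degree $3$, in agreement with $4=\deg r_B+\deg C_B$ and with the standard linkage of a line and a twisted cubic by two quadrics.

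Finally I would record the incidences. Intersecting equations gives $H_B\cap L_B=\{n_{13}=n_{31}=n_{41}=0\}$, a codimension--three linear subspace, i.e. the line $r_B$, which lies on $L_B$ and so introduces no new component. For $r_B\cap C_B$ one notes that on $r_B$ the minors $Q_1,Q_2$ vanish identically, so a point of $r_B$ lies on $C_B$ precisely when $M_{23}=0$; restricted to $r_B\cong\mathbb{P}^1$ the form $M_{23}$ is a binary quadratic, hence vanishes at two points for general $N_B$. The same count falls out of liaison, since $r_B\cup C_B$ is the $(2,2)$ complete intersection of arithmetic genus $1$ and $p_a(r_B\cup C_B)=p_a(r_B)+p_a(C_B)+\#(r_B\cap C_B)-1$ forces $\#(r_B\cap C_B)=2$. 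The main obstacle is the clean splitting $V(Q_1,Q_2)=r_B\cup C_B$ together with the verification that genericity of $N_B$ makes $C_B$ a reduced, irreducible, smooth twisted cubic; once this is secured the incidence counts are routine.
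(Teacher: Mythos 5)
Your argument is correct and reaches the same decomposition $H_B\cup L_B\cup C_B$ with the same ideals ($L_B=V(n_{31},n_{41})$, $C_B=V(n_{13},q_1,q_2,q_3)$ in the paper's notation, your $M_{23}=q_1$), and your count of $r_B\cap C_B$ via the restriction of the third minor to $r_B$ is exactly the paper's. Where you genuinely diverge is in how the residual scheme $J=I:\langle n_{13}\rangle$ is split into its two pieces. The paper computes a minimal free resolution of $J$, reads off the Hilbert polynomial $\frac12 t^2+\frac92 t$ to see that the top-dimensional part is a plane, and then carries out a fairly long explicit syzygy computation to prove $J:\langle n_{31},n_{41}\rangle=\langle n_{13},q_1,q_2,q_3\rangle$. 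You instead isolate $L_B$ by saturating with respect to $n_{13}$ (a one-line ideal computation, since $n_{31},n_{41}\in I':n_{13}$ and $I'\subseteq\langle n_{31},n_{41}\rangle$), and you isolate $C_B$ by restricting to $H_B$ and using the classical liaison of a line and a twisted cubic inside the $(2,2)$ complete intersection $V(Q_1,Q_2)$ — this bypasses the paper's ideal-quotient computation entirely and is considerably cleaner. The trade-off is that your route is essentially set-theoretic: it identifies the reduced components and their scheme structures as varieties, but it does not by itself show that the residual of $L_B$ inside $V(J)$ is exactly the reduced twisted cubic with no embedded components, which is the extra precision the paper's resolution buys (and which makes its Hilbert-polynomial bookkeeping, $\frac12t^2+\frac32t+1+3t+1-2$, come out exactly). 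For the statement as phrased, your level of precision suffices, and your genus/liaison cross-check of $\#(r_B\cap C_B)=2$ is a nice independent confirmation not present in the paper.
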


\begin{proof} In this case, $ n_{31} $ and $ n_{41} $ are linearly independent, and $ n_{13} $ is the common factor to all the maximal minors of $ N_B$. Let $ H_B $ be the
hyperplane defined by $ n_{13}=0 .$ If $ I $ is the ideal generated
by the maximal minors of $ N_B $ and $ J = I : \langle n_{13} \rangle$, we have
$$ J = \langle n_{13} n_{31}, n_{13} n_{41}, q_3 = n_{31} n_{42} - n_{41} n_{32}, q_2 = n_{31} n_{43} - n_{41} n_{33} \rangle.$$
The degree $ 2 $ forms $ q_3 $ and $ q_2 $ are two of the three maximal minors of the $ 2 \times 3 $
matrix consisting of the last two rows of $ N_B$. For completeness, let $ q_1 = n_{32} n_{43} - n_{42} n_{33}$
 be the last maximal minor of such a matrix.
One can then easily obtain the minimal free resolution of
$ J :$  $$ 0 \to R(-5) \stackrel{M_2}{\lra} \begin{array}{c} R^3(-3) \\ \oplus
\\ R(-4) \end{array} \stackrel{M_1}{\lra} R^4(-2) \to J \to 0, $$ where
$$ M_1 = \left( \begin{array}{cccc} n_{41} & n_{42} & n_{43} & 0
\\ -n_{31} & -n_{32} & -n_{33} & 0 \\ 0 & -n_{13} & 0 & -q_2 \\ 0 & 0 & -n_{13} & q_3 \end{array} \right), \quad
M_2 = \left( \begin{array}{c} q_1 \\ -q_2 \\ q_3 \\ n_{13} \end{array} \right).$$ From the resolution,
the Hilbert polynomial of the scheme defined by $ J $ is $ p(t) = \frac 12 t^2 + \frac 92 t $
and so the top dimensional part of such a scheme is a linear space $ L_B $ of dimension $ 2.$
As $ J \subseteq \langle n_{31}, n_{41} \rangle$, then the defining ideal of $ L_B $ is $ \langle n_{31}, n_{41} \rangle$.
The residual scheme is defined by the ideal $ J : \langle n_{31}, n_{41} \rangle$. Let
$ K = \langle n_{13}, q_1, q_2, q_3 \rangle,$ then we claim  $ J : \langle n_{31}, n_{41} \rangle = K.$

To show $ K \subseteq J : \langle n_{31}, n_{41} \rangle$ one needs only to verify that
$ n_{31} q_1 $ and $ n_{41} q_1 $ are in $ J.$ This follows from $ n_{i1} q_1 - n_{i2} q_2 + n_{i3} q_3 = 0 $ for $ i=3,4.$

To show $ K \supseteq J : \langle n_{31}, n_{41} \rangle$, let $ f \in J : \langle n_{31}, n_{41} \rangle ,$ and show $ f \in K$. From the assumption, both
 $ f n_{31} \in J $ and $ f n_{41} \in J$,
 thus $ n_{31} f = \alpha_1 n_{13} n_{31} + \alpha_2 n_{13} n_{41} + \alpha_3 q_3 + \alpha_4 q_2$ and $ n_{41} f = \beta_1 n_{13} n_{31} + \beta_2 n_{13} n_{41} + \beta_3 q_3 + \beta_4 q_2.$ Then
 $$ ( n_{41} \alpha_1 - n_{31} \beta_1) n_{13} n_{31} + ( n_{41} \alpha_2 - n_{31} \beta_2
  )n_{13} n_{41} + ( n_{41} \alpha_3 - n_{31} \beta_3 ) q_3 + ( n_{41} \alpha_4 - n_{31} \beta_4
 ) q_2 = 0.$$
 Then, there exists a matrix $ C $ of type $ 4 \times 1 $ such that
 $$ \left( \begin{array}{c} n_{41} \alpha_1 - n_{31} \beta_1 \\ n_{41} \alpha_2 - n_{31} \beta_2 \\ n_{41} \alpha_3 - n_{31}
 \beta_3 \\ n_{41} \alpha_4 - n_{31} \beta_4 \end{array} \right) = M_1 C = M_1
 \left( \begin{array}{c} c_1 \\ c_2 \\ c_3 \\ c_4 \end{array} \right).$$
 Standard computations give $ f = n_{13} \left( a + n_{31} d_1 + n_{41} d_2 \right) - b q_1 + d_4 q_2 + d_3 q_3$, where  $$ C = \left( \begin{array}{c} a \\ 0 \\ 0 \\ b \end{array} \right) + c M_2,
 \quad \left( \begin{array}{c} \alpha_1 \\ \alpha_2 \\ \alpha_3 \\ \alpha_4 \end{array} \right) =
 \left( \begin{array}{c} a \\ 0 \\ n_{33} b \\ -n_{32} b \end{array} \right) + n_{31}
 \left( \begin{array}{c} d_1 \\ d_2 \\ d_3 \\ d_4 \end{array} \right),$$ and
 $$ \left( \begin{array}{c} \beta_1 \\ \beta_2 \\ \beta_3 \\ \beta_4 \end{array} \right) =
 \left( \begin{array}{c} 0 \\ a \\ n_{43} b \\ -n_{42} b \end{array} \right) + n_{41}
 \left( \begin{array}{c} d_1 \\ d_2 \\ d_3 \\ d_4 \end{array} \right).$$
 Hence $f \in K,$ and thus $ K = J : \langle n_{31}, n_{41} \rangle$, as claimed.
 $ K $ is the defining ideal of a twisted cubic curve
 $ C_B $ contained in the hyperplane $ H_B $ because $ q_1, q_2, q_3 $ are the $ 2 \times 2 $ minors of a $ 2 \times 3 $
 general matrix.

Lastly, $r_B =  H_B \cap L_B $ is defined by $ n_{13}, n_{31}, n_{41}, $ hence $r_B$ is a line if the linear forms are linearly independent. Moreover, $ r_B \cap C_B $ is defined by $ n_{13}, n_{31}, n_{41}, q_1 $ and so it has degree $ 2$.
Hence, $ r_B $ is a secant line to $ C_B.$
\end{proof}

\begin{proposition} \label{deg-loc-4x3-4}
The degeneration locus of a general $ N_C $ in $ \mathbb{P}^4 $ is the union of a hyperplane
$ H_C$ and two dimension $ 2 $ linear spaces $ L_{C1}, L_{C2}$ that meet at a point $p \in H_C$.
\end{proposition}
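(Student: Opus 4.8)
The plan is to follow the strategy used for $ N_A $ and $ N_B $: compute the maximal minors of $ N_C $, factor out the common linear form to split off the hyperplane $ H_C $, and then decompose the residual locus. First I would expand the four maximal minors. Writing $ \ell = \alpha n_{31} + \beta n_{22} $ for the $(1,3)$ entry and expanding each $ 3 \times 3 $ determinant along the column carrying a single nonzero entry among the first two columns, one finds that the minors are, up to sign,
$$ \ell\, n_{41} n_{42}, \qquad \ell\, n_{31} n_{42}, \qquad \ell\, n_{22} n_{41}, \qquad \ell\, n_{22} n_{31}. $$
Thus $ \ell $ is the common linear factor guaranteed by Theorem \ref{4x3-deg1}, and the hyperplane component is $ H_C = \{ \ell = 0 \} $. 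Denoting by $ I $ the ideal of the degeneracy locus, we have $ I = \ell \cdot I' $ with $ I' = \langle n_{41} n_{42}, n_{31} n_{42}, n_{22} n_{41}, n_{22} n_{31} \rangle $; equivalently $ I' = I : \langle \ell \rangle $ for generic data.

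The key structural observation is that $ I' $ factors as the product of two ideals of linear forms,
$$ I' = \langle n_{22}, n_{42} \rangle \cdot \langle n_{31}, n_{41} \rangle, $$
since the four generators are precisely the pairwise products of one generator from each factor. Because the zero locus of a product of ideals is the union of their zero loci, and the zero locus of $ \ell \cdot I' $ is the union of $ \{\ell=0\} $ with that of $ I' $, the degeneracy locus is set-theoretically $ H_C \cup L_{C1} \cup L_{C2} $, where $ L_{C1} $ is defined by $ \langle n_{22}, n_{42} \rangle $ and $ L_{C2} $ by $ \langle n_{31}, n_{41} \rangle $. Under the standing assumption that the linear forms are as independent as possible, the pairs $ \{n_{22}, n_{42}\} $ and $ \{n_{31}, n_{41}\} $ are each independent, so $ L_{C1} $ and $ L_{C2} $ are $ 2 $-dimensional linear subspaces of $ \mathbb{P}^4 $.

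Finally I would analyze the incidence. The intersection $ L_{C1} \cap L_{C2} $ is cut out by the four linear forms $ n_{22}, n_{42}, n_{31}, n_{41} $, generically independent, hence a single point $ p $. Since $ n_{22} $ and $ n_{31} $ both vanish at $ p $, so does $ \ell $, giving $ p \in H_C $. I would also record that the three components are genuinely distinct: restricting $ \ell $ to $ L_{C1} $ yields $ \alpha n_{31} $ and to $ L_{C2} $ yields $ \beta n_{22} $, neither identically zero for general entries, so neither plane is contained in $ H_C $. The only point requiring real care is this genericity bookkeeping — guaranteeing that $ n_{22}, n_{42}, n_{31}, n_{41} $ are independent, so the two planes meet in exactly one point rather than along a line; the determinantal expansion and the product-ideal factorization themselves are routine.
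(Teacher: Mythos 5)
Your proposal is correct and follows essentially the same route as the paper: expand the minors to exhibit the common factor $\alpha n_{31}+\beta n_{22}$ defining $H_C$, identify the residual ideal $\langle n_{22}n_{31}, n_{22}n_{41}, n_{31}n_{42}, n_{41}n_{42}\rangle$ as cutting out the union of the two planes $\{n_{22}=n_{42}=0\}$ and $\{n_{31}=n_{41}=0\}$, and observe that their intersection point lies on $H_C$. The only cosmetic difference is that you phrase the residual ideal as the product of the two linear ideals while the paper writes it as their intersection; for transversal linear subspaces these coincide, and either suffices for the stated decomposition.
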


\begin{proof} The maximal minors of $ N_C $ have the linear factor $ \alpha n_{31} + \beta n_{22} $ in common, which defines the hyperplane $ H_C.$

The residual locus is defined by the ideal
$$ I = \langle n_{22} n_{31}, n_{22} n_{41}, n_{31} n_{42}, n_{41} n_{42} \rangle.$$
In the general case, we have $ I = \langle n_{22}, n_{42} \rangle \cap \langle n_{31}, n_{41} \rangle.$ Thus the degeneracy locus is equal to
$ H_C \cup L_{C1} \cup L_{C2},$ where $ L_{C1} $ and $L_{C2}$ are 2-dimensional linear spaces defined by $ \langle n_{31}, n_{41} \rangle $ and  $ \langle n_{22}, n_{42} \rangle$, respectively. Moreover, $ L_{C1} \cap L_{C2} $ is defined by $ \langle n_{22}, n_{31}, n_{41}, n_{42} \rangle$ and so
it is a point in $ H_C$.
\end{proof}

\begin{proposition} \label{deg-loc-4x3-6}
The degeneration locus of a general $ N_D $ in $ \mathbb{P}^4 $ is a hyperplane $ H_D$, and the union of a quadric surface $ Q_D $ and a line $ r_D $ in
$ H_D $. Moreover, $ Q_D \cap r_D $ is a point.
\end{proposition}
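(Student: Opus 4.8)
The plan is to follow the strategy of the previous propositions: compute the maximal minors of $N_D$, factor out their common linear divisor to obtain $H_D$, and then dissect the residual locus, just as was done for $N_C$ in Proposition \ref{deg-loc-4x3-4}.

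First I would expand the four maximal minors of $N_D$, obtained by deleting one row at a time. A direct computation gives, up to sign,
$$ n_{31}\bigl(n_{23}n_{42}+n_{33}n_{41}-n_{31}n_{43}\bigr),\quad n_{13}n_{31}n_{42},\quad n_{13}n_{31}n_{41},\quad n_{13}n_{31}^2. $$
All four are divisible by $n_{31}$, so $H_D$ is the hyperplane $n_{31}=0$, and the defining ideal of the degeneration locus is the product $I=\langle n_{31}\rangle\, J$, where $J=\langle g_1,g_2,g_3,g_4\rangle$ with $g_1=n_{23}n_{42}+n_{33}n_{41}-n_{31}n_{43}$, $g_2=n_{13}n_{42}$, $g_3=n_{13}n_{41}$, $g_4=n_{13}n_{31}$. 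Since the zero locus of a product ideal is the union of the two zero loci, the degeneration locus is $H_D$ together with the scheme cut out by $J$, and the task reduces to describing the latter.

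Next I would decompose the locus of $J$. The three generators $g_2,g_3,g_4$ share the factor $n_{13}$, so their common zeros form $\{n_{13}=0\}\cup\ell$, where $\ell$ is the locus $n_{31}=n_{41}=n_{42}=0$, a line under the natural general position assumption on these forms. Intersecting with the quadric $g_1=0$, on the hyperplane $\{n_{13}=0\}\cong\Pin{3}$ the equation $g_1=0$ cuts out a quadric surface $Q_D$; on $\ell$, however, every monomial of $g_1$ contains one of $n_{31},n_{41},n_{42}$, so $g_1$ vanishes identically there and the whole line $r_D:=\ell$ survives as a component. This yields the residual locus $Q_D\cup r_D$, with $r_D\subseteq H_D$ because $n_{31}=0$ holds along $r_D$.

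Finally, for the intersection $Q_D\cap r_D$ I would note that $g_1$ is automatically zero along $r_D$, so $Q_D\cap r_D$ is defined by $n_{13},n_{31},n_{41},n_{42}$; in general position these four linear forms are independent and cut out a single point of $\Pin{4}$. The genericity hypotheses are used exactly here and in declaring $\ell$ a line, namely that $n_{31},n_{41},n_{42}$ are linearly independent and that $n_{13}$ is independent of them. I expect the one genuinely non-routine point to be the observation that $g_1$ vanishes identically on $\ell$ rather than meeting it in finitely many points: this is what promotes $r_D$ from a mere intersection to a full component of the residual locus, and it is the reason $r_D$ is recorded as a separate piece even though, set-theoretically, it already lies inside $H_D$.
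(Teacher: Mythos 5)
Your proposal is correct and follows essentially the same route as the paper: extract the common factor $n_{31}$ to obtain $H_D$, then split the residual ideal $\langle n_{13}n_{31},\, n_{13}n_{41},\, n_{13}n_{42},\, g_1\rangle$ into the quadric surface $V(n_{13},g_1)$ and the line $V(n_{31},n_{41},n_{42})$, which meet in the single point $V(n_{13},n_{31},n_{41},n_{42})$. The only cosmetic difference is that the paper computes the residual as the colon ideal $I:\langle n_{31}\rangle$ and writes it as an intersection of the two ideals $\langle n_{13},g_1\rangle$ and $\langle n_{31},n_{41},n_{42}\rangle$, whereas you argue set-theoretically via the product ideal; your key observation that $g_1$ vanishes identically on the line (i.e.\ $g_1\in\langle n_{31},n_{41},n_{42}\rangle$) is exactly what underlies the paper's intersection decomposition.
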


\begin{proof} The degeneracy locus of $ N_D $ is defined by the following ideal:
$$ I = \langle n_{13} n_{31}^2, n_{13} n_{31} n_{41}, n_{13} n_{31} n_{42}, n_{31}
( n_{31} n_{43} - n_{42} n_{23} - n_{41} n_{33}) \rangle.$$ It is evident that
$ I \subseteq \langle n_{31} \rangle$, and so the top dimensional part of the degeneracy locus is the hyperplane $ H_D$ defined by $n_{31}.$
The residual is defined by the ideal \begin{equation*} \begin{split} J = I : \langle n_{31} \rangle =& \langle n_{13} n_{31}, n_{13} n_{41}, n_{13} n_{42},
n_{31} n_{43} - n_{42} n_{23} - n_{41} n_{33} \rangle = \\
=& \langle n_{13}, n_{31} n_{43} - n_{42} n_{23} - n_{41} n_{33} \rangle \cap \langle n_{31}, n_{41}, n_{42} \rangle. \end{split} \end{equation*}
Let $ Q_D $ be the quadric surface defined by
$ \langle n_{13}, n_{31} n_{43} - n_{42} n_{23} - n_{41} n_{33} \rangle, $ and let $ r_D $ be the line defined by
$ \langle n_{31}, n_{41}, n_{42} \rangle$, which is contained in $ H_D$. Then the degeneracy locus of $N_D$ is as in the statement.

The intersection $r_D \cap Q_D $ is defined by the ideal $ \langle n_{13}, n_{31}, n_{41}, n_{42} \rangle $ which gives a point,
as claimed.
\end{proof}

Now, we consider the matrices in Theorem \ref{4x3-deg2}.
\begin{proposition} \label{deg-loc-4x3-7}
Let $ N $ be a general $ 4 \times 3 $ matrix of linear forms that drops rank on a degree $ 2 $ hypersurface,
as in Theorem \ref{4x3-deg2}.
\begin{enumerate}
\item If $ N = S_1 X_1$, then its degeneration locus in $ \mathbb{P}^4 $ is the union of a cone $ Q $ over a smooth quadric surface in $ \mathbb{P}^3$, and its vertex.
\item If $ N = S_2 X_2$, then its degeneration locus in $ \mathbb{P}^4 $ is a smooth quadric hypersurface $ Q $
and a line $ r $ in $ Q$.
\item If $ N = S_3 X_3$, then its degeneration locus in $ \mathbb{P}^4 $ is the union of a smooth quadric hypersurface $ Q $
and a $2$--dimensional linear space.
\end{enumerate}
\end{proposition}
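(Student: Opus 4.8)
The three statements share a common skeleton, which I would isolate first. Denoting the four maximal minors of $N$ by $D_i=q\,l_i$, the ideal they generate is $q\,\langle l_1,\dots,l_4\rangle=q\,\mathfrak{p}_L$, where $\mathfrak{p}_L$ is the homogeneous ideal of $L=V(l_1,\dots,l_4)$. Hence set-theoretically the degeneration locus is $Q\cup L$ with $Q=V(q)$, and $L$ is exactly the residual linear space of Theorem \ref{4x3-deg2}, of dimension $0,1,2$ (a point, a line, a plane in $\mathbb{P}^4$) in cases $(1),(2),(3)$. Everything then splits into two independent problems: the projective type of the quadric $q$, and the position of $L$ relative to $Q$.

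To pin down $q$ I would compute a single maximal minor in coordinates adapted to $L$. In case $(1)$, taking $l_i=x_{i-1}$ puts $L$ at $[0:0:0:0:1]$; because $S_1$ involves only $l_1,\dots,l_4$ and $X_1$ is constant, every entry of $N$, and therefore $q$, lies in $K[x_0,\dots,x_3]$, so $Q$ is automatically a cone with vertex $L$, and evaluating $D_4/l_4$ for one generic $X_1$ shows the associated $4\times4$ symmetric matrix of $q$ is nonsingular, i.e. the base is a smooth quadric surface. In case $(3)$ there is a clean shortcut: subtracting $z_{41}$ times the first row and $z_{42}$ times the second from the fourth collapses the minor obtained by deleting the third row to $l_3\det X_3$, whence $q=\pm\det X_3$. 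In case $(2)$ there is no such collapse, and I would obtain $q$ either from $D_1$, which by Cauchy--Binet is a combination of the $3\times3$ minors of the bottom three rows of $S_2$, or more cheaply by evaluating it for one generic instance.

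The position of $L$ I would read off from the rank of $N$ restricted to $L$. Writing $N=S_iX_i$ as a sum of rank-one constant-column terms plus a matrix with entries in $\mathfrak{p}_L$, the reduction $N|_L$ has rank $r=0,1,2$ in cases $(1),(2),(3)$. Expanding a $3\times3$ minor multilinearly over its columns and discarding the terms with two proportional columns shows $D_i\in\mathfrak{p}_L^{\,3-r}$; combined with $D_i=q\,l_i$ and the identity $\mathfrak{p}_L^{\,k}:\mathfrak{p}_L=\mathfrak{p}_L^{\,k-1}$ for a prime generated by coordinates, this yields $q\in\mathfrak{p}_L^{2}$ in $(1)$ (so $L$ is the vertex of the cone), $q\in\mathfrak{p}_L$ in $(2)$ (so $L\subset Q$ is a line lying on the quadric), and no constraint in $(3)$, where $L$ is a genuine second component meeting $Q$ along the conic $\{q|_L=0\}$. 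This bookkeeping is forced and needs no genericity.

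The main obstacle is the determination of the rank of the symmetric matrix of $q$, since this alone decides the type of $Q$. For case $(2)$ I would establish nondegeneracy by exhibiting one explicit choice of the free scalars and linear forms whose $5\times5$ matrix is invertible and then invoking semicontinuity of the rank, so that the general member yields a smooth quadric hypersurface containing the line $L$. Case $(3)$ is the most delicate: the formula $q=\det X_3$ shows $q$ is assembled from the scalar row of $X_3$, whose contribution is an antisymmetric operator of rank two, so a careful count of the rank of the resulting $5\times5$ matrix is exactly what pins down the projective type of the general $Q$. The containment $L\subset Q$ in $(2)$ and the vertex structure in $(1)$, by contrast, are already delivered by the rank-drop argument of the previous paragraph.
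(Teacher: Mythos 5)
Your proposal is correct, and its overall skeleton --- factor the minors as $D_i = q\,l_i$, so the locus is $Q\cup L$ with $L$ the residual linear space, then determine the projective type of $q$ and the position of $L$ --- is the same as the paper's. Where you genuinely diverge is in how you locate $L$ relative to $Q$: the paper reads this off explicit formulas for $q$ (in case (1) it exhibits a symmetric matrix $D$ built from the $3\times 3$ minors of $X_1$ with $q=\ell^T D\,\ell$, so the cone structure and the vertex are visible; in case (2) it writes $q=\ell_2(1,3,4)-\ell_3(1,2,4)+\ell_4(1,2,3)$, which manifestly lies in $\langle \ell_2,\ell_3,\ell_4\rangle$, giving $r\subseteq Q$), whereas you obtain the same conclusions uniformly from the multilinear expansion of the minors of $N=S_iX_i$ restricted to $L$ together with the colon identity $\mathfrak{p}_L^{\,k}:\mathfrak{p}_L=\mathfrak{p}_L^{\,k-1}$. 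Your argument is cleaner and explains in one stroke why $q\in\mathfrak{p}_L^2$ (vertex), $q\in\mathfrak{p}_L$ (line on the quadric), or no constraint, in the three cases; the paper's explicit matrices buy concrete formulas that are reused elsewhere (e.g.\ in the numerical examples of Section \ref{inst_exp}). On the remaining point --- the rank of the symmetric matrix of $q$, hence smoothness of the quadric or of the base of the cone --- both you and the paper ultimately rely on checking a single (symbolic or numerical) instance and semicontinuity; note only that in case (3) you describe the verification rather than perform it, but the paper's ``$q=\det(X_3)$ and so it defines a smooth quadratic hypersurface'' is no more detailed, and the check is a finite computation on one example.
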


\begin{proof} Let us consider first the case in which $ N = S_1 X_1.$ The maximal minors of $N$, with appropriate sign, are given by $ q \ell_1, \dots, q \ell_4, $ where $ \ell_1, \dots, \ell_4 $ are linearly independent linear forms and $ q $ is a quadratic form. Let $ \left(i,j,k\right) $ denote the determinant of the submatrix of $X_1$ consisting
 of the $ i$--th, $j$--th and $ k$--th row. Consider the symmetric matrix:

$$ D = \frac 12 \left( \begin{array}{cccc} \scriptstyle{-2 \left(2,3,6\right)} & \scriptstyle{-\left(2,3,5\right)+\left(1,3,6\right)} & \scriptstyle{-\left(2,3,4\right)-\left(1,2,3\right)} & \scriptstyle{\left(3,4,6\right)+\left(2,5,6\right)} \\
\scriptstyle{-\left(2,3,5\right)+\left(1,3,6\right)} & \scriptstyle{2 \left(1,3,5\right)} &
\scriptstyle{\left(1,3,4\right)-\left(1,2,5\right)} & \scriptstyle{\left(3,4,5\right)-\left(1,5,6\right)} \\
\scriptstyle{-\left(2,3,4\right)-\left(1,2,3\right)} & \scriptstyle{\left(1,3,4\right)-\left(1,2,5\right)} &
\scriptstyle{-2\left(1,2,4\right)} & \scriptstyle{-\left(2,4,5\right)-\left(1,4,6\right)} \\
\scriptstyle{\left(3,4,6\right)+\left(2,5,6\right)} & \scriptstyle{\left(3,4,5\right)-\left(1,5,6\right)} &
\scriptstyle{-\left(2,4,5\right)-\left(1,4,6\right)} & \scriptstyle{2 \left(4,5,6\right)} \end{array} \right) .$$
Standard direct computations show that $$ q = \left( \begin{array}{ccc} \ell_1 & \dots & \ell_4 \end{array} \right)
D \left( \begin{array}{c} \ell_1 \\ \vdots \\ \ell_4 \end{array} \right) .$$
Notice that  $ q $ defines a cone over a quadric of $ \mathbb{P}^3 $ because its
 equation depends only on $ 4 $ linearly independent linear forms. The vertex is defined by
 $ \ell_1 = \dots = \ell_4 = 0$. The smoothness of the quadric can be obtained by computing the determinant of $ D $ with the help of any software for symbolic computations.

In the second case, the maximal minors of $N$, with appropriate sign, are
still given by $ q \ell_1, \dots, q \ell_4 $ but now $ \ell_1 = -z_2
\ell_2 - z_3 \ell_3 - z_4 \ell_4.$ Moreover one has:
 $$ q = \ell_2 \left(1,3,4\right) - \ell_3
\left(1,2,4\right) + \ell_4 \left(1,2,3\right).$$ One can verify on a random numerical example that the general $ q $ defines a smooth quadratic
hypersurface. Apart from the vanishing locus of $ q$, the
degeneracy locus of $N$ is given by the three linearly independent linear forms $ \left( \ell_2, \ell_3, \ell_4\right) $ defining a line $r.$ From the previous description of $ q$, we
get that $ r\subseteq Q.$

In the remaining case, the maximal minors with sign of $ N $ are once again given by
$ q \ell_1, \dots, q \ell_4$, but now $ \ell_1 = -z_{31}
\ell_3 - z_{41} \ell_4 $ and $ \ell_2 = -z_{32} \ell_3 - z_{42}
\ell_4$. Then, the degeneracy locus is defined by $$ \langle q
\rangle \cap \langle \ell_3, \ell_4 \rangle.$$ The quadratic form $
q $ is equal to $ \det(X_3) $ and so it defines a smooth quadratic
hypersurface in $ \mathbb{P}^4$. The second ideal defines a linear
space of dimension $ 2 $ because the two linear forms are linearly
independent.
\end{proof}

\section{Multiview Geometry for projections from $ \mathbb{P}^4 $
to $ \mathbb{P}^2$} \label{prelimCV}
A classical problem in Computer Vision is {\it reconstruction}:
 given multiple images of an unknown scene,
taken from unknown cameras, try to reconstruct the positions of the
cameras and of the scene points. It is not difficult to see that
sufficiently many images, and sufficiently many sets of
corresponding points in the given images, chosen as images of the same set of test points in space, should allow for a
successful projective reconstruction. As mentioned in the introduction, similar problems have been investigated in a more general framework of projections from $\Pin{k}$ to $\Pin{m},$ and we are focusing on the case of three projections from $\Pin{4}$ to $\Pin{2}.$

As above, $\Pin{k}$ denotes the $k-$dimensional complex projective space. Once a projective frame is
chosen for $\Pin{k},$ coordinate vectors $\mathbf{X}$ of points in
$\Pin{k}$ are written as columns, thus $\mathbf{X}^T
=(X_1,X_2,...,X_{k+1}).$ A {\it camera} $P$ is
a linear projection from $\Pin{4}$ onto $\Pin{2},$  from a line
$C_P,$ called {\it center} of projection. The target space
$\Pin{2}$ is called a {\it view}. A {\it scene}  is a set of points
$ \nbXi \in \Pin{4}.$

The camera $P$ is identified with a $3 \times 5 $ matrix of
maximal rank, defined up to a multiplicative constant.  Hence
$C_P$ comes out to be the right annihilator of $P.$

If $\mathbf{X}$ is a point in $\Pin{4},$ we denote its image in
the projection equivalently as $P(\mathbf{X})$ or $P \cdot
\mathbf{X}.$

Given different projections with skew centers,
$P_i:\Pin{4}\setminus C_{P_i} \to \Pin{2}$,  $i=1 \dots m,$ the
images $E_{r,s} = P_r(C_s)$ ($r,s =1 \dots m, r \neq s$)  are
lines of the view spaces, usually called {\it epipoles}.

Proper linear subspaces (points or lines), $L_i, i=1 \dots m,$ of
different views  are said to be \textit{corresponding} if there
exists at least one point $\mathbf{X} \in \Pin{4}$ such that
$P_i(\mathbf{X})\in L_i $ for all $i=1 \dots m.$

Hartley and Schaffalitzky,  \cite{Hart-Schaf}, constructed a
set of multiview tensors, called {\it Grassmann tensors}, encoding
the relations between sets of corresponding subspaces in the general settings of multiple projections from $\Pin{k}$ to $\Pin{h_i}.$ Such tensors, which generalize the notion of {\it fundamental matrix}, are the key ingredient of the process of reconstruction. Indeed, once such a tensor is obtained, one can proceed to reconstruct cameras and scene points.
For our purposes, here we recall the definition of such tensors in the case of three projections from $\Pin{4}$ to $\Pin{2}.$

Consider three projections $P_j:\Pin{4}\setminus{C_{P_j}} \to
\Pin{2},$ $j = 1,2,3,$ with centers $ C_{P_1}, C_{P_2}, C_{P_3} $
in general position. A {\em profile} is a
partition $(\alpha_1, \alpha_2, \alpha_3)$ of $k+1 = 5,$ i.e. $1
\leq \alpha_j \leq 2$ for all $j,$ and $\sum\alpha_j = 5.$ The only possible profiles are: $(\alpha_1,
\alpha_2,\alpha_3)=(2,2,1),(2,1,2)$ or $(1,2,2).$

Let $\{L_1, L_2, L_3\} $ be three general linear subspaces of
$\Pin{2}$ of codimension $\alpha_1, \alpha_2, \alpha_3$,
respectively. Let $S_j$ be the maximal rank $3 \times (3-
\alpha_j)$-matrix whose columns are a basis for $L_j, j=1,2,3 $.
By definition, if all the $L_j$s are corresponding subspaces
there exists a point $\mathbf{X} \in \Pin{k}$ such that
$P_j(\mathbf{X})\in L_j$ for $j=1,2, 3.$ In other words there
exist three vectors $\mathbf{v_j} \in \mathbb{C}^{3-\alpha_j}$ $j
= 1,2,3$ such that:
\begin{equation}
\label{grasssystem}
\left(%
\begin{array}{cccc}
  S_1 & 0 & 0 & P_1 \\
  0 & S_2 &0 & P_2 \\
  0 & 0 & S_3 & P_3 \\
\end{array}%
\right)\cdot
\left(%
\begin{array}{c}
  \mathbf{v_1} \\
  \mathbf{v_2} \\
  \mathbf{v_3} \\
  \mathbf{X} \\
\end{array}%
\right)=\left(%
\begin{array}{c}
  0 \\
  0 \\
  0 \\
\end{array}%
\right).
\end{equation}

The existence of a non--trivial solution
$\{\mathbf{v_1},\mathbf{v_2},\mathbf{v_3},\mathbf{X} \}$ of the
system (\ref{grasssystem}) implies that the $9 \times 9$
coefficient matrix has determinant zero. This determinant can be
thought of as a tri-linear form (tensor) in the Pl\"{u}cker
coordinates of the spaces $L_j$. This tensor is the {\it Grassmann
tensor}.

We explicitly construct such a tensor for the
profile $(2,2,1),$ others being similar. In the case of the chosen profile, $L_1,L_2$ are points
and $L_3$ is a line. We denote by ${\mathbf x}=(x_1,x_2,x_3)^T,
{\mathbf y}=(y_1,y_2,$ $y_3)^T$ the homogeneous coordinates of $L_1$
and $L_2$, respectively, and by ${\mathbf z}=(z_1,z_2,z_3)^T$ and
${\mathbf w}=(w_1,w_2,$ $w_3)^T$ the homogeneous coordinates of two
points of $L_3.$ Then the matrix of the coefficients of
the linear system (\ref{grasssystem}) becomes:
\begin{equation*} \begin{split}
& T^{P_1,P_2,P_3}_{L_1,L_2,L_3}=\left(%
\begin{array}{ccccc}
  {\mathbf x} & 0 & 0 & 0 & P_1 \\
  0 & {\mathbf y} & 0 &0 & P_2 \\
  0 & 0 & {\mathbf z} & {\mathbf w}& P_3 \\
\end{array}%
\right)\end{split}
\end{equation*}

 \medskip
If $L_1,L_2$ and $L_3$ are corresponding spaces then the linear
system

\begin{equation}
\label{Msystem}
T^{P_1,P_2,P_3}_{L_1,L_2,L_3} \ \left(%
\begin{array}{c}
  \lambda \\
  \mu \\
  \alpha \\
  \beta \\
  \mathbf{X} \\
\end{array}
\right)=\mathbf{0}
\end{equation}

\noindent has a non trivial solution, and so
$\det(T^{P_1,P_2,P_3}_{L_1,L_2,L_3})=0.$

\noindent The converse is true for general $L_1, L_2$ and $L_3$
since we are looking for a non trivial solution of \brref{Msystem}
in which $\mathbf{X}$ is a point of  $\Pin{4}$ and hence
$\mathbf{X} \neq \mathbf{0},$ and $\mathbf{X} \notin C_1 \cup C_2
\cup C_3.$

In particular this happens if $L_1 \notin E_{1,2} \cup E_{1,3}$
and $L_2 \notin E_{2,1} \cup E_{2,3}$. Under this hypothesis, if
$\det(T^{P_1,P_2,P_3}_{L_1,L_2,L_3})=0,$ then $L_1, L_2$ and $L_3$
are corresponding spaces as the linear system \brref{Msystem} has a non
trivial solution with $\mathbf{X}$ as required. Indeed, if
$\mathbf{X} = \mathbf{0}$ were part of the solution, either $ L_1 $ or $ L_2 $ are not proper points, or $ L_3 $ is not
a line. Moreover $\mathbf{X} \notin C_1 \cup C_2 \cup C_3, $ from
our initial hypothesis on the epipoles.
 \medskip

In conclusion, for the chosen profile $(2,2,1),$ one sees that
$\det(T^{P_1,P_2,P_3}_{L_1,L_2,L_3})=0$ is indeed the tri--linear
constraint between the coordinates $\mathbf{x}$ and $\mathbf{y}$
of points in the first and second view and the Pl\"{u}cker (i.e. dual)
coordinates of lines $<\mathbf{z},\mathbf{w}>$ in the third
view, encoding the fact that $L_1,L_2,L_3$ are corresponding spaces.

More explicitly, denoting by $p_1,p_2,p_3$ the dual Pl\"{u}cker
coordinates of $L_3$, with an iterated application of the
generalized Laplace expansion, one gets:
\begin{equation}
\label{reltens} det(T^{P_1,P_2,P_3}_{L_1,L_2,L_3})=
\sum_{i,j,k}T_{i,j,k}x_i y_j p_k
\end{equation}
where $i,j,k=1,2,3$ and where the entries $T_{i,j,k}$ of the
tensor are given by:
\begin{equation}
\label{Tijk} T_{i,j,k}=(-1)^{(i+j+k+1)} \det
\begin{bmatrix}
  P_{1_{\widehat{i}}} \\
  P_{2_{\widehat{j}}} \\
  P_3(4-k) \\
\end{bmatrix} \end{equation}

\noindent where, as above,  $ P_{1_{\widehat{i}}}$ and
  $P_{2_{\widehat{j}}}$ are
obtained by $P_1$ and $P_2$ respectively deleting rows $i$ and
$j$, and $P_3(4-k)$ denotes the row $4-k$ of the matrix $P_3$.

\section{Critical loci: general set up}
\label{critlocsetup} As discussed in the previous section,
sufficiently many views and sufficiently many sets of
corresponding points in the given views, should allow for a
successful projective reconstruction. This is generally true, but
even in the classical set up of two projections from $\Pin{3}$ to
$\Pin{2}$ one can have non projectively equivalent pairs of sets
of scene points and of cameras that produce the same images in the
view planes, from a projective point of view, thus preventing
reconstruction. Such configurations and the loci they describe are
referred to as {\it critical}. In \cite{LAIA} critical loci for
projective reconstruction of camera centers and scene points from
multiple views for projections from $\Pin{k}$ to $\Pin{2}$ have
been introduced and studied. Here we shortly recall the basic
definitions in the case of interest, i.e. three
views from $\Pin{4}$ to $\Pin{2}.$

 A set of points $\bxj,$ $j=1,\dots,N,$ $N\ge 7,$
in $\Pin{4}$ is said to be a \textit{critical configuration for
projective reconstruction from $3$-views} if there exist a
non-projectively equivalent set of $N$ points
$\{\mathbf{Y}_j\}\subset \Pin{4}$ and two collections of $3 \times
5$ full-rank projection matrices $P_i$ and $Q_i,$ $i=1,2,3,$ such
that, for all $i$ and $j$, $P_i \cdot \mathbf{X}_j = Q_i \cdot
\mathbf{Y}_j$, up to homography in the image planes. Critical configurations arising from a given pair of triples of projections $P_i$ and $ Q_i$ define a scheme called {\it critical locus}.

As shown in \cite{bntMEGA}, the generators of the
ideal of the critical locus $\mathcal{X}$ can be obtained directly
making use of the Grassmmann tensor built with the $Q_i$s, $det(T^{Q_1,Q_2,Q_3}_{L_1,L_2,L_3}),$ introduced in section
\ref{prelimCV}. The idea is that if $\mathbf{
X}$ is in the critical locus, the points $P_1(\mathbf{X}),$ $P_2(\mathbf{X})$ and $P_3(\mathbf{X})$ are corresponding not only for the $P_i$s but also for the $Q_i$s. This implies that
$\mathbf{X}$ is in the critical locus $\mathcal{X}$  if and only if all the
maximal minors of the following matrix vanish:

\begin{equation}
\label{emme}
M = \left(%
\begin{array}{cccc}
  P_1(\mathbf{X}) & 0 & 0 &  Q_1 \\
  0 & P_2(\mathbf{X}) & 0 &  Q_2 \\
  0 & 0 & P_3(\mathbf{X}) &  Q_3 \\
\end{array}%
\right).
\end{equation}

\subsection{The general case of critical loci}

As we have seen in the previous section, the critical locus
$\mathcal{X}$ turns out to be a determinantal variety, associated to the matrix $M$ as above. In \cite{bntMEGA} the authors study $\mathcal{X}$  when the matrix $M$ satisfies two key hypothesis:
\begin{itemize}
\item[i)] $M$ does not drop rank in codimension 1;
\item[ii)] the last five columns of $M$ are linearly independent.
\end{itemize}
Hypothesis ${\rm ii)}$ has a computer vision interpetation. If these columns were linearly dependent, the centers of projection of the matrices $Q_i$ would intersect in at
least one point, in which case the entire ambient space would be critical.
Hypothesis ${\rm i)}$ has important algebro-geometric consequences. Indeed it implies that  Hilbert--Burch Theorem can be applied to the ideal of $\mathcal{X}$, enabling the computation of its generators and their first syzygy module, giving a
scheme--theoretical description of the critical locus itself. It
turns out that the ideal is minimally generated by $ 4 $ degree-$
3 $ forms that are the maximal minors of a $ 4 \times 3 $ matrix
$N_{\mathcal{X}}$ with linear entries. Moreover, the entries of
each column of $N_{\mathcal{X}}$ define four hyperplanes meeting
in a line. These three lines are indeed the centers of projection
of the $P_i$. Hence it follows that the critical locus is a
determinantal variety of codimension $ 2 $ and degree $ 6 $ in $
\mathbb{P}^4$, and so it belongs to the irreducible component of
the Hilbert scheme containing the Bordiga surfaces. We
recall that a Bordiga surface $ S $ is the blow--up of $
\mathbb{P}^2 $ at ten general points, embedded in $ \mathbb{P}^4$
via the complete linear system of the quartics through the $ 10 $
points.
\subsection{The degenerate cases of critical loci}

In this section we study the same problem analyzed in
\cite{bntMEGA} and recalled above, assuming now that the maximal
minors of $M$ have a non trivial common factor, and hence the
hypothesis of the Hilbert--Burch Theorem are not
satisfied.  Obviously we still assume hypothesis ${\rm ii)}$ as above, i.e. that $ \mbox{rank}\left(%
\begin{array}{c}
   Q_1 \\
   Q_2 \\
   Q_3 \\
\end{array}%
\right)=5.$

Following \cite{bntMEGA}, we now construct a $ 4 \times 3$ matrix
$N_{\mathcal{X}},$ with linear entries, whose maximal minors
define the same ideal as those of the matrix $M$.

Up to elementary row operations,  $ M $ can be written as the
following block matrix: $$ M = \left(
\begin{array}{cc} A & B
\\ C & D \end{array} \right), $$ where $ A $ is of type $ 4 \times 3$ and
 $ D $ is of type $5 \times 5,$ invertible. A new series of elementary
operations on rows and columns can then turn $M$ into the block matrix:
$$
 \left( \begin{array}{cc} N_{\mathcal{X}} & 0 \\ 0 & I_5 \end{array} \right)
 $$
where $ I_5 $ is the $ 5 \times 5 $ identity matrix, $ 0 $ are
null matrices of suitable type, and
\begin{equation}
\label{NfromM} N_{\mathcal{X}} = A - B
D^{-1} C.
\end{equation}
 The matrix $N_{\mathcal{X}}$ is a $ 4 \times 3$ matrix
of linear forms, the minors of which have a non trivial common
factor and hence it is one of the matrices classified in Section
\ref{34mat} whose degeneracy loci are studied in Section \ref{DegLoc}.
\begin{remark} \label{rankcolumns}
Notice that,in view
of \brref{emme} and \brref{NfromM}, the $i$-th column of
$N_{\mathcal{X}}$ contains linear
forms that vanish on the center of
projection $C_{P_i}.$ Therefore each column of $N$ can contain at most three linearly independent linear forms.
\end{remark}

Conversely, following \cite{bntMEGA}, for a given a matrix $N_{\mathcal{X}}$ as above,  it is possible to
recover projection matrices $ P_1, P_2, P_3, Q_1,Q_2, Q_3 $ whose critical locus is given by the minors of
$N_{\mathcal{X}}.$

The above remark gives a necessary condition for one of the matrices classified in
section \ref{34mat} to appear in the setting of multiview
geometry. This condition is now checked for all cases
appearing in Section \ref{34mat}, and the critical locus is
described, following Section \ref{DegLoc} for admissible cases.

\begin{theorem}
\label{crtiloctheo} With notations as above, let $ P_1, P_2, P_3,$
and $Q_1,Q_2, Q_3 $ be two sets of projections from $\Pin{4}$ to
$\Pin{2}$ whose critical locus is defined by a matrix
$N_{\mathcal{X}}$ whose maximal minors have a non trivial common
factor. Then the critical locus is as in one of the following
cases:

\begin{itemize}
\item [i)] the union of a hyperplane $H_A$ and a minimal surface
$S_A$ of degree $ 3 $ in $ \mathbb{P}^4$. The three
centers of projections $C_{P_i}$ are contained in $S_A.$

\item [ii)] a hyperplane $ H_B$, and the union of a $2$-space $
L_B $ and a twisted cubic curve $ C_B \subseteq H_B$. Moreover, $
H_B \cap L_B $ is a line $r_B$ that meets $ C_B $ in two points.
The three centers of projections $C_{{P_i}}$ are
contained in $H_B.$

\item [iii)] a hyperplane $ H_D$, and the union of a quadric
surface $ Q_D $ and a line $ r_D $ in $ H_D $. Moreover, $ Q_D
\cap r_D $ is a point and the three centers of projection $C_{P_{i}}$ are contained in a 2-space.

\item[iv)]the union of a cone $ Q $ over a smooth quadric surface in $ \mathbb{P}^3$, and its vertex. The three centers of projection $C_{P_{i}}$ are contained in $Q.$

\item[v)]a smooth quadric hypersurface $ Q $ and a line $ r
\subset Q$. The three centers of projections
$C_{P_i}$ are contained in $Q.$

\item [vi)] the union of a smooth quadric hypersurface $ Q $ and a
$2$-space $\Pi.$ The three centers of
projections $C_{P_{i}}$ are contained in $\Pi.$
\end{itemize}
\end{theorem}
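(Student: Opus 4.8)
The plan is to run a finite case analysis over the canonical forms of Section~\ref{34mat}, use Remark~\ref{rankcolumns} as the filter that selects the admissible ones, and then quote the matching Proposition of Section~\ref{DegLoc} for the shape of the locus. By the construction recalled above, the maximal minors of $N_{\mathcal{X}}$ share a non trivial common factor, so up to elementary row and column operations $N_{\mathcal{X}}$ is one of $N_A,N_B,N_C,N_D$ of Theorem~\ref{4x3-deg1} (linear common factor) or one of $S_1X_1,S_2X_2,S_3X_3$ of Theorem~\ref{4x3-deg2} (quadratic common factor). For each form I would impose the condition of Remark~\ref{rankcolumns}, namely that the entries of every column span a space of linear forms of dimension at most $3$, discard the forms that cannot meet it, and for the survivors read off the critical locus together with the position of the three centers $C_{P_1},C_{P_2},C_{P_3}$.

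I would dispose of the quadratic case first, as there the column condition is the subtle point. Writing a column of $N=S_1X_1$ as $S_1$ times a constant vector and expanding its four entries in $\ell_1,\dots,\ell_4$, the $4\times4$ matrix of coefficients turns out to be skew symmetric; its rank is therefore even, and Remark~\ref{rankcolumns} forces it to be $\le 2$, i.e. the vanishing of the associated Pfaffian. With this single scalar relation imposed on each column, Proposition~\ref{deg-loc-4x3-7}(1) returns the cone $Q$ and its vertex, which is case~iv). The same expansion for $S_2X_2$ and $S_3X_3$ shows both are admissible and, by Proposition~\ref{deg-loc-4x3-7}(2) and~(3), gives cases~v) and~vi). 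In cases~iv) and~v) every column lies in $\langle\ell_1,\dots,\ell_4\rangle$ and cuts out $C_{P_i}$, and one checks that the quadric $q$ vanishes there, so the three centers lie on $Q$; in case~vi) the analogous bookkeeping places the centers on the $2$-space $\Pi$.

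For the linear case I would treat the four matrices in turn. In $N_A$ and $N_D$ certain columns generically span dimension $4$, on each of which Remark~\ref{rankcolumns} imposes a single linear relation; after this specialization Proposition~\ref{deg-loc-4x3-1} gives case~i) and Proposition~\ref{deg-loc-4x3-6} gives case~iii). The form $N_B$ already satisfies the column bound and Proposition~\ref{deg-loc-4x3-2} gives case~ii). Two of the three centers are then pinned down directly by Remark~\ref{rankcolumns}: vanishing of the first, resp. second, column of $N_A$ kills a column of the defining $3\times2$ matrix, so $C_{P_1},C_{P_2}\subseteq S_A$; the factor $n_{13}$ sits in the second and third columns of $N_B$, so $C_{P_2},C_{P_3}\subseteq H_B$; and the shared form $n_{31}$ of $N_D$ forces $C_{P_1},C_{P_2}\subseteq\{n_{31}=0\}$. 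The remaining center in each case, and the assertion that all three lie on the stated component, I would obtain from the explicit reconstruction of $P_1,P_2,P_3,Q_1,Q_2,Q_3$ from $N_{\mathcal{X}}$ given in~\cite{bntMEGA}.

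The hard part, and the only genuinely non routine case, is showing that $N_C$ does \emph{not} occur, which is why it is absent from the list. Its peculiarity is that every entry lies in the four dimensional span $\langle n_{22},n_{31},n_{41},n_{42}\rangle$ and, moreover, its third column is contained in the span of the first two; by Proposition~\ref{deg-loc-4x3-4} the locus is then a cone, with vertex the common zero $p$ of those four forms, over a hyperplane and two $2$-planes. Cone shaped loci are not themselves forbidden---case~iv) is one---so the exclusion cannot come from the geometry of the degeneration locus alone. I expect it instead from the reconstruction: in trying to realize $N_C$ as $A-BD^{-1}C$, the collapse of the third column into the span of the first two should force $\mathrm{rank}(Q_1;Q_2;Q_3)<5$, contradicting the standing hypothesis~ii), so that no admissible pair of camera triples yields $N_C$. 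Carrying out this last reconstruction argument, rather than the routine matching of loci in the other six cases, is where I expect the real difficulty to lie.
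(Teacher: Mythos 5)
Your overall architecture --- run through the canonical forms of Theorems \ref{4x3-deg1} and \ref{4x3-deg2}, filter with Remark \ref{rankcolumns}, and quote the matching proposition of Section \ref{DegLoc} --- is exactly the paper's, and your treatment of the six admissible cases (including the skew-symmetry/even-rank observation for $S_1X_1$, which is precisely how the paper derives the relation $x_{3j}x_{4j}+x_{2j}x_{5j}+x_{1j}x_{6j}=0$) matches the actual proof.

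The gap is the exclusion of $N_C$. You correctly single it out as the one non-routine step, but you leave it as an expectation rather than an argument, and the mechanism you sketch is not the one that works. Note first that $N_C$ passes the filter of Remark \ref{rankcolumns}: its columns span spaces of linear forms of dimensions $2$, $2$ and (generically) $3$, so nothing is ruled out at that stage. Your proposed route --- that the entries of the third column lying in $\langle n_{22},n_{31},n_{41},n_{42}\rangle$ should force $\mathrm{rank}(Q_1;Q_2;Q_3)<5$ --- does not obviously go through: that span condition is on linear forms, not on columns as module elements, and the stacked $Q$-matrix already has rank $5$ from the invertible block $D$ alone, so degeneracy of $B$ cannot by itself contradict hypothesis ii). The paper's actual argument is row-based, not column-based: after swapping the first and third columns so that the column with three independent forms sits first, the first row of $N_{\mathcal{X}}=A-BD^{-1}C$ is $(\alpha n_{31}+\beta n_{22},0,0)$; since this equals the first row of $A$ and the nonzero columns of $C$ contain independent linear forms, the first row of $BD^{-1}$, and hence of $B$, must vanish. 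That row of $(A\,|\,B)$ is then a nontrivial constant combination of the three rows of a single block $(P_i(\mathbf{X})\,|\,Q_i)$ whose $Q_i$-part vanishes, contradicting the maximal rank of the individual projection matrix $Q_i$ --- not hypothesis ii). Without this (or an equivalent) argument, the case $N_C$, whose degeneration locus by Proposition \ref{deg-loc-4x3-4} is a hyperplane plus two planes and appears nowhere in the theorem's list, is left unexcluded and the classification is incomplete.
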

\begin{proof}
Our assumptions imply that, up to elementary row and column operations, $N_{\mathcal{X}}$ is one of the $4 \times 3$ matrices
classified in Section \ref{34mat}.
Considering matrices as in Theorem \ref{4x3-deg1}, one sees that:
\begin{itemize}
    \item[A)]$\NX$ can be of the form $N_A.$ Indeed, specialization of the first two columns, and suitable elementary
    operations on the third column, bring $N_A$ to a matrix whose three columns define lines. From
    Proposition \ref{deg-loc-4x3-1}, we obtain case $ i)$ in the statement.
    \item[B)]$\NX$ can be of the form $N_B.$ Indeed, as above, suitable elementary operations on the columns, bring $N_B$
    to a matrix whose three columns define lines, all contained in the hyperplane $n_{13} = 0.$ From
    Proposition \ref{deg-loc-4x3-2} we obtain case $\rm ii)$ in the statement.
    \item[C)]$\NX$ can not be of the form $N_C.$ From our general assumptions, the third column defines a line.
    Swapping the first and third column in $N_C,$ we can see that in this
    case, in \brref{NfromM}, it is
    \begin{equation} A = \left(
    \begin{array}{ccc} \alpha n_{31} + \beta n_{22}&0 & 0 \\\alpha n_{41}& 0 & 0  \\ \beta n_{42}& 0 & 0\\ 0 & n_{42} & 0 \end{array} \right)\end{equation} and the first column of $C$ is the zero vector. Moreover, all linear forms on the remaining columns of $C$ are linearly independent. As the first row of $\NX$ contains two zeros, it follows that the first row of $BD^{-1}$ must be the zero vector, which implies that the first row of $B$ is the zero vector, which is impossible as projection matrices $Q_i$ are of maximal rank.
    \item[D)]$\NX$ can be of the form $N_D.$ Adding to the first and second column of $N_D$ suitable multiples of
    the third one, with non-zero coefficients $a$ and $b$ respectively, one can bring $N_D$ to a matrix whose three columns define
    lines, the first two of which are contained in the 2-space $n_{13} = n_{31} + b n_{23} + a n_{33} = 0,$ the first and the third of which are contained in the 2-space $n_{13} = n_{23} = 0,$ and where the second and third of which are contained in the 2-space $n_{13} = n_{33} = 0.$
    From Proposition \brref{deg-loc-4x3-6} we obtain case $ iii)$.
    \end{itemize}
    Considering matrices as in Theorem \ref{4x3-deg2}, one sees that:
    \begin{itemize}
    \item[1)]$\NX$ can be of the form $S_1 X_1.$ In this case, the $j$-th column of $ N $ is $ (N)_j = S_1 \left( \begin{array}{c} x_{1j} \\ \vdots \\ x_{6j} \end{array} \right) = Y_j L$ where $Y_j =\left( \begin{array}{cccc} 0 & x_{3j} & -x_{2j} & x_{6j} \\ -x_{3j} & 0 & x_{1j} & x_{5j} \\ x_{2j} & -x_{1j} & 0 & x_{4j} \\ -x_{6j} & -x_{5j} & -x_{4j} & 0 \end{array} \right)$ and $L = \left( \begin{array}{c} l_1 \\ l_2 \\ l_3 \\ l_4 \end{array} \right).$ As $Y_j$ is skewsymmetric, its rank is even and Remark \ref{rankcolumns} implies that $\rk{Y_j} = 2,$ hence $ x_{3j}x_{4j}+x_{2j}x_{5j}+x_{1j}x_{6j} = 0$. Recalling \brref{emme} and \brref{NfromM}, this case must come from a matrix $M$ whose rows were permuted. Without loss of generality we can assume that $ N = A - B D^{-1} C ,$ where
        \begin{equation}
        \label{A_and_C_case_iv}
        A = \begin{bmatrix} P_{11} X & 0 & 0 \\ P_{12} X & 0 & 0 \\ 0 & P_{21} X & 0 \\ 0 & 0 & P_{31} X \end{bmatrix}
         \qquad
         C = \begin{bmatrix} P_{13} X & 0 & 0 \\ 0 & P_{22} X & 0 \\ 0 & P_{23} X & 0 \\ 0 & 0 & P_{32} X \\ 0 & 0 & P_{33} X \end{bmatrix}.
        \end{equation}Let $ T = (t_{ij})=B D^{-1}.$ Then the first column of $ N $ is $$ Y_1 L = ( P_{11} X - t_{11} P_{13} X , P_{12} X - t_{21} P_{13} X , -t_{31} P_{13} X , -t_{41} P_{13} X)^T$$ where $ P_{11} X, P_{12} X,$ and $ P_{13} X $ are linearly independent. Remark \ref{rankcolumns} implies $ t_{31} = t_{41} = 0$ and therefore $ x_{11} = x_{21} = x_{41} = x_{51} = x_{61} = 0,$ and $ x_{31} \not= 0.$ Thus $ P_{11} X = t_{11} P_{13} X + x_{31} l_2,$ $P_{12} X = t_{21} P_{13} X - x_{31} l_1.$ Notice that the first projection matrix $P_1$ is therefore completely determined once the linear form $P_{13}X$ and $t_{11}, t_{21}$ are chosen.

         Proceeding in a similar fashion for the second and third column of $N,$ with somewhat laborious but standard calculations, one sees that $P_2$ and $P_3$ are also completely determined once their third rows are known, i.e. once linear forms $P_{23}X$ and $P_{33}X$ are chosen together with suitable parameters related to $t_{ij}.$

         From $T=BD^-1$ one can then retrieve the remaining matrices $Q_{i}.$ For the convenience of the reader, a numerical example of this case is found in section \ref{inst_cono_quadrico}.
         From Proposition \brref{deg-loc-4x3-7} we obtain case $iv).$
     \item[2)]$\NX$ can be of the form $S_2 X_2.$ For example,assuming all $z_i$s vanish, one can choose three generic
     linear forms as first row for $X_2$ and the $3 \times 3$ identity matrix to complete $X_2.$ From
     Proposition \ref{deg-loc-4x3-7} we obtain case $ v)$ in the statement.
     \item[3)]$\NX$ can be of the form $S_3 X_3.$ Recalling \brref{leesse}, we can assume $z_i=0$ per $i=1, \dots 4.$ Remark \ref{rankcolumns} implies that one of the entries of the last row of $X_3$ must vanish and that the linear forms of the first and second column of $X_3$ must be linearly dependent. It follows then that the entries of the first two columns of $S_3X_3$ span lines while those of the last column span a plane. $M$ can then be reconstructed as usual. Similarly to case $1)$ above, notice that the third projection matrix is not uniquely determined, as one linear form can be freely chosen. From Proposition \brref{deg-loc-4x3-7} we obtain case $vi).$

\end{itemize}
\vspace{-\belowdisplayskip}\[\]
\end{proof}

\section{Reconstruction of the trifocal tensor}
\label{rec_tens_deg}

This section is dedicated to the investigation of the actual possibility of reconstructing the trifocal tensor in the situations listed in Theorem \ref{crtiloctheo} and, when reconstruction is possible, of instability phenomena. Once the tensor is obtained, then one can further reconstruct cameras and sets of scene points, if
needed, by intersecting  projecting rays of corresponding spaces.
As previously
noted, the trifocal tensor encodes triplets of corresponding
spaces in the views.
Let $L_1=(x_1,x_2,x_3)$ and $L_2=(y_1,y_2,y_3),$ be two
points in the first two views, respectively, and let  $L_3=
<(z_1,z_2,z_3),(w_1,w_2,w_3)>$ be a line in the third view,
spanned by two given points. Then (\ref{reltens}) identifies
triplets $L_1,L_2,L_3$ of corresponding spaces. Viceversa,  given
a large enough number of triplets of corresponding spaces
$L_1,L_2,L_3$, repeated use of (\ref{reltens}) gives rise to a
linear system that, when its matrix $M_T$ has maximal rank $\rk{M_T} = 26,$   determines, up to a multiplicative
constant, the $27$ entries of the trifocal tensor $T$. When cameras are in general configurations, $\rk{M_T}$ is indeed maximal. This is reflected in  \cite{Ito} where, roughly speaking in our context, the authors study the map associating to a triplet of projection matrices the variety parameterizing the triplets of corresponding points in the views. The main result of this work is that the general fiber of the map above consists of a single orbit of $PGL(5,\mathbb{C}),$ in other words the projection matrices, and thus the tensor, are uniquely determined up to projective equivalence in $\Pin{4}.$

Although projection matrices in the cases of interest in this section are not general, one can verify that, in cases $\rm{(i,ii,iv,v)}$ the rank of the corresponding matrix $M_T$ is still maximal, regardless of the chosen profile. On the contrary, as we will see below, $\rk{M_T}$ in cases $\rm{iii}$ and $\rm{vi}$ is not maximal.

\begin{remark}
Assume now that at least two of the centers of projections, say $C_{{P_1}}$ and $C_{{P_2}},$ are
contained in a $2$-plane $\Pi,$ and hence they intersect. In this case
the trifocal tensor of profile $(2,2,1)$ is no longer uniquely determined by
constraints generated by triplets of corresponding spaces. It is
interesting to see both from a geometric and an algebraic point of
view how infinitely many tensors vanish on the same triplets of
corresponding spaces.

From the geometric point of view, one can argue as follows. Assume
that the two lines, center of projections, $C_{P_1}$ and $C_{P_2}$
intersect at a point $\mathbf{V}$. Let $\mathbf{X}$ be a scene
point and let us consider two corresponding points
$L_1=P_1(\mathbf{X})$ and $L_2=P_2(\mathbf{X})$ in the first and
second view, respectively. Let $L_3$ be any line in the third view
and consider the two planes, $\pi_1=<L_1,C_{P_1}>$ and
$\pi_2=<L_2,C_{P_2}>$ which are the projecting rays for  $L_1$ and
$L_2$. Then $\pi_1$ and $\pi_2$ meet along the line
$\lambda=<\mathbf{X},\mathbf{V}>$, which in turn intersects the
$3-$ dimensional space $<L_3,C_{P_3}>$ at a point $\mathbf{Y},$
such that $L_1=P_1(\mathbf{Y}), L_2=P_2(\mathbf{Y})$ and
$P_3(\mathbf{Y}) \in L_3.$

This shows that $L_1$,$L_2$ and $L_3$ are corresponding spaces for
any choice of $L_3$.

From the algebraic point of view, it is easy to see that the
trifocal tensor is not well defined. Indeed, one can assume that the matrices $P_i$,$ i=1,2$,
have the same first two rows. From the expression (\ref{Tijk}) of
the entries of the trifocal tensor, one sees that $T_{i,j,k}$
vanishes unless $i=1$ and $j=2$ for any $k$ or $i=2$ and $j=1$ for
any $k$.  In other words the trilinear relation in this case is
\begin{equation}\label{trireldeg2} \quad (T_{1,2,1}x_1
y_2+T_{2,1,1}x_2 y_1) p_1 + (T_{1,2,2}x_1 y_2+T_{2,1,2}x_2 y_1)
p_2+(T_{1,2,3}x_1 y_2+T_{2,1,3}x_2 y_1) p_3,\end{equation}
where $(x_1,x_2,x_3)$ and $(y_1,y_2,y_3)$ are chosen coordinates in the first two views and $(p_1,p_2,p_3)$ are Pl\"ucker coordinates of lines in the third view.
Moreover $T_{1,2,k}=-T_{2,1,k},$ for any $k$. Now, if
$(x_1,x_2,x_3) = P_1(\mathbf{X})$ and $(y_1,y_2,y_3) =
P_2(\mathbf{X})$, then $x_1 = y_1$ and $x_2 = y_2$, so that
expression \brref{trireldeg2} vanishes for any choice of
$p_1,p_2,p_3$. Moreover any trilinear relation of the form
$$\label{trireldeg2.2} \quad (ax_1
y_2-ax_2 y_1) p_1 + (bx_1 y_2-bx_2 y_1) p_2+(bx_1 y_2-bx_2 y_1)
p_3$$ vanishes as well. This means that the admissible trifocal
tensors for this profile depend on three homogeneous parameters, which shows that in these cases $\rk{M_T} = 24.$
On the contrary, assuming that the third center is not contained in $\Pi,$ a direct computation shows that, for the remaining profiles, $\rk{M_T} = 26.$
\end{remark}

\subsection{Reconstruction of the trifocal tensor - cases $iii,vi$}
In these cases all three centers of projection $C_{P_i}$ are contained in the same 2-space $\Pi.$ The above remark shows that there is no profile for which the tensor can be uniquely reconstructed.

\subsection{Reconstruction of the trifocal tensor - cases $i,ii,iv$ and $v$}

In cases $i,ii,iv$ and $v$ we consider camera centers for the $P_i$s
which are in special position in $\Pin{4},$ but do not intersect
one another. Although the critical locus, as seen above, turns out
to be reducible and of higher codimension than usual, the
reconstruction algorithm for the tensor is unaffected by the
degenerate camera configuration. As discussed above, the key
element needed to reconstruct the tensor is the existence of an
actual constraint as a result of imposing to a triplet of spaces
in the three views to be corresponding. The fact that the three
center of projections are now lines in special position does not
affect the generation of the constraints. Indeed, given any to
general points $L_1, L_2$ in the first and second
view,respectively, their projecting rays $R_1, R_2$ are $2$-spaces
in $\Pin{4}$ that meet at one point $\mathbf{X} \in \Pin{4}.$ The
projecting ray $R_3$ of a line in the third view is a $3$-space
that, in general, will not contain $\mathbf{X}.$ Hence, the
condition $\mathbf{X} \in R_3,$ needed to impose to $L_1,L_2,L_3$
to be corresponding, is an actual constraint. This is confirmed by a few numerical tests in which $\rk{M_T} = 26.$

\section{Instability results near critical loci}
\label{inst_exp}
 In this section we intend to study the instability of the
 reconstruction algorithm in the vicinity of the critical locus.
 Notice that in cases $i$ and $ii$ the reducible critical locus has a hyperplane as a component.
 When corresponding points chosen in the views are images of points which all lie in a hyperplane, whether or not this hyperplane is a component of the critical locus, $\rk{M_T}$ cannot be maximal. If it were, one could reconstruct uniquely the tensor and hence the set of projection matrices, which is clearly impossible as points lying in a hyperplane can be mapped to the same points by infinitely many different projection matrices. Therefore, we will not concern ourselves with instability phenomena starting from points lying on linear components of the critical loci. We will then only consider the cubic rational scroll $S_A$  in case $i,$ and the hyperquadric in cases $iv$ and $v.$

 As it is natural in applications, we reset the framework in an affine context, assuming that the world scene observed lies entirely within the
 affine chart given by $x_5 \neq 0$ in $\Pin{4}.$
 The experimental process to investigate instability relies on algorithms developed in \cite{tubbISVC08}, and it is described below.

\begin{itemize}
\item[1]{\it Generation of Critical Configurations} \\
Two sets of projection matrices $\{P_i\}$ and
$\{Q_i\}$, $ i = 0,1,2$ of the appropriate type, are obtained from $N_{X}$ as described in \cite{bntMEGA}. From projection matrices equations of the components of the critical locus and sets of points $\bxi$ lying on the non-linear components
are obtained with the help of Maple, using elementary geometric arguments described in the next two subsections.
\item[2]{\it Perturbation of critical configurations}\\
Points $\bxi$ are then perturbed with a $4$-dimensional
noise, normally distributed, with zero mean, and with
assigned standard deviation $\sigma$, obtaining a new
configuration $\{\nbxi^{pert}\},$ which is close to being
critical.
Configuration $\{\nbxi^{pert}\}$ is then projected via $P_1, P_2, P_3.$
The resulting images $\mathbf{x_{ij}} = P_j\nbxi^{pert}$ are
again perturbed with normally distributed 2-dimensional
noise with zero mean and standard deviation $0.01$ to
obtain $\{\mathbf{x_{ij}}^{pert}\}.$
 \item[3]{\it Reconstruction} \\
 The trifocal tensor corresponding to the true
 reconstruction, $T_P,$ is
 computed  from the $P_i$s
 using a reconstruction algorithm described in detail in \cite{tubbISVC08}, section 4.1, and implemented in Matlab.
 An estimated trifocal tensor $T$ is
 computed from $\{\mathbf{x_{ij}}^{pert}\},$ using
an algorithm described in \cite{tubbISVC08}, section 4.2 and implemented in Matlab as well.

\item[4]{\it Estimating instability}\\
As trifocal tensors are defined up to multiplication by a
non-zero constant, $T_P,$ and $T$ are normalized (using Frobenius norm) and the
space of trifocal tensors is then identified with a subset of the
quotient of the unit sphere in $\mathbb{R}^{27},$
$S^{26}/\simeq,$ where $\simeq$ denotes antipodal
identification. It is simple to account for antipodal
identification when computing distances:
 for any pair of tensors $A$ and $B,$ with unit Frobenius norm, we set
$d(A,B) = min (||A-B||, ||A+B||).$ Using this notion of distance,
we estimate whether $T$ is close to $T_P,$ or not, where ``close"
means within a hypersphere of suitable radius $\delta.$ In order
to choose a suitable $\delta$ we start from a set of random points in $\Pin{4},$ project them with the $P_i,$ perturb images using the same noise as in 2 above, reconstruct the tensor $T_{test},$ compute $d(T_P, T{test})$ (as above), repeat this procedure 1000 times, find the mean $m$ of all distances, and select $\delta$ as a suitable multiple of $m.$

 The above procedure is then
repeated 10 times for every fixed value of $\sigma.$
\end{itemize}
\subsection{Instability results in case $i$}
\label{inst_scroll}
In this case, suitable projection matrices are chosen as follows:
$$P_1 = \begin{bmatrix} 1&0 &0 &0 &0\\ 0& 1& 0& 0 &0\\0&0&1&0&0\end{bmatrix},
 P_2 = \begin{bmatrix}1&0 &0 &0 &0\\ 0&0&0&1&0\\0&0&0&0&1 \end{bmatrix},
 P_3 = \begin{bmatrix}\frac{10158}{25}&729&4050&\frac{31152}{5}&-3645\\
                      608&-\frac{13692}{25}&1900&836&\frac{13692}{5}\\
                      288&162&\frac{258}{25}&396&-810\end{bmatrix}$$
$$Q_1 = \begin{bmatrix} -1&0 & -1&0 &0\\ 0& -1& 0& -1 &0\\0&0&0&0&-1\end{bmatrix},
 Q_2 = \begin{bmatrix}0&0 &\frac{55}{51} &-\frac{75}{34} &-\frac{625}{51}\\ 1&0&0&0&0\\0&1&0&0&0 \end{bmatrix},
 Q_3 = \begin{bmatrix}0&0&1&0&0\\
                      0&0&0&1&0\\
                      0&0&0&0&1\\\end{bmatrix}$$

A set of $100$ points are generated on the cubic scroll $S,$ each of which is obtained as the third intersection of $S$ with a $2$-plane spanned by two points chosen on the centers of projection of the $P_i$ and a third point chosen randomly in $\Pin{4}.$
The result of the experimental process described above is presented  in Figure \ref{scrollcubico}, where the frequency with which the reconstructed solution is close
or far from the true solution $T_P,$ against the values of
$\sigma$ utilized, is plotted. In this case $m = 0.014,$ and $\delta = 0.03.$

\begin{center}
\begin{figure}
\includegraphics[width =0.8\linewidth]{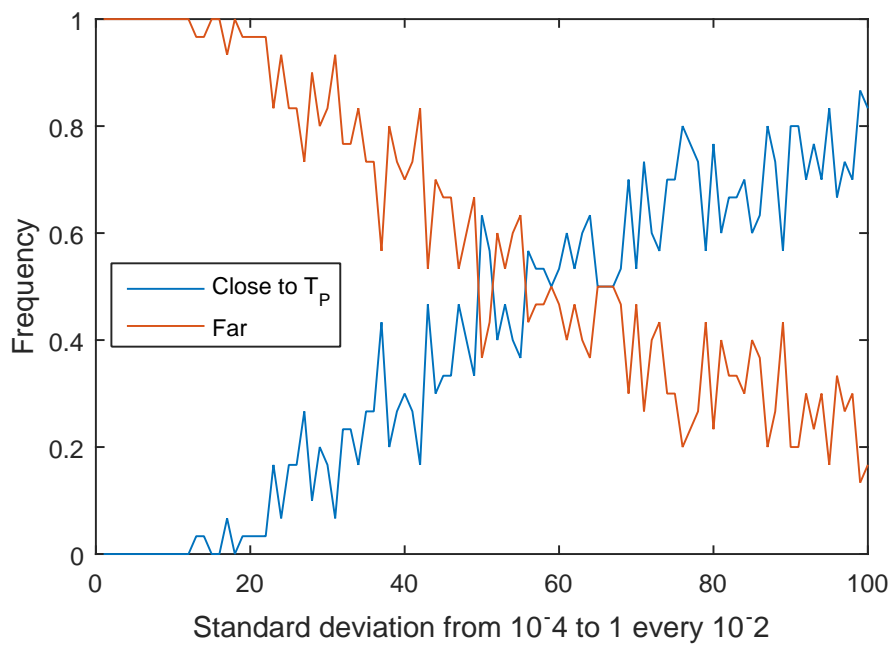}
\label{scrollcubico}
\caption{Instability of reconstruction of a trifocal tensor as described in \ref{inst_scroll} near $S$}
\end{figure}
\end{center}
The set of parameters utilized in this case
is as follows:
\begin{itemize}
\item $X_i \in S$, $\delta = .03,$ $\sigma \in (10^{-4},1),$ every
    $10^{-2}.$
\end{itemize}
The experiment shows that reconstruction near the cubic scroll is quite unstable. As the standard deviation $\sigma$ of the perturbation in $\Pin{4}$ approaches 1 the stability of the reconstruction increases as expected, but  still not fully stable when $\sigma = 1.$

\subsection{Instability results in case $iv$}
\label{inst_cono_quadrico}
In this case, let us choose projection matrices as follows:
$$\scriptstyle
P_1 =  \begin{bmatrix} 0&1 &-6 &-2 &4\\ -1& 0& -15& -5 &10\\0&0&-3&-1&2\end{bmatrix},\ \
 P_2 = \begin{bmatrix}-35&-42 &-47 &13 &-32\\ -8&-12&-11&4&-8\\-2&-3&-2&1&-2 \end{bmatrix},
 P_3= \begin{bmatrix}-2&3 &-12 &6 &3\\ 4&6&0&-8&6\\2&3&0&-2&3 \end{bmatrix}$$
$$\scriptstyle
 Q_1 = \begin{bmatrix} 2&1 & -4&1 &-2\\ 5& -2& 8& 0 &0\\1&0&0&0&0\end{bmatrix},\ \
 Q_2 = \begin{bmatrix}0&5 &-4&3&-6 \\ 0&1&0&0&0\\0&0&1&0&0 \end{bmatrix},
 Q_3 = \begin{bmatrix}0&1&-4&-2&5\\
                      0&0&0&1&0\\
                      0&0&0&0&1\\\end{bmatrix}.$$
Recalling \brref{NfromM}, \brref{emme}, and \brref{A_and_C_case_iv}, one can check that this case is of type $iv$ with $$S_1 = \begin{bmatrix}
0   & -x_3 & x_2& 0 & 0 &x_4\\
x_3 & 0 & -x_1& 0 & x_4 & 0\\
-x_2 & x_1 & 0 & x_4& 0 & 0\\
0 & 0 & 0& -x_3 & -x_2& -x_1
\end{bmatrix},
X_1 = \begin{bmatrix}
0   & 6 &  0 \\
0   & -3 &  0 \\
1   & 0 &  0 \\
0   & -3 &  12 \\
0   & 0 &  0 \\
0   & 0 &  4
 \end{bmatrix}.$$ The equation of the critical quadric cone $Q$ is $x_1^2-2x_1x_2+3x_3x_1+x_1x_4-6x_3x_2.$
A set of $100$ points are generated on the quadric cone, leveraging the fact that the equation of the quadric can be easily parametrized.
The result of the experimental process described above
is presented in Figure \ref{quadric_cone}, where the frequency with which the
reconstructed solution is close or far from the true solution
$T_P,$ against the values of $\sigma$ utilized, is plotted. In
this case $m = 0.0012,$ and $\delta = 0.015.$
\begin{center}
\begin{figure}
\includegraphics[width =0.8\linewidth]{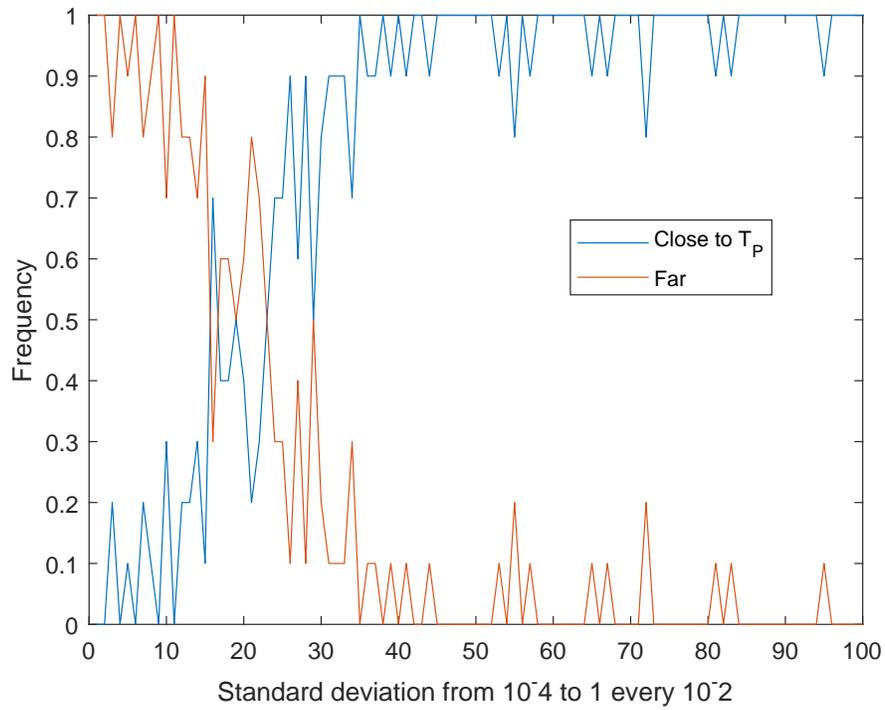}
\label{quadric_cone}
\caption{Instability of reconstruction of a
trifocal tensor as described in \ref{inst_cono_quadrico} near $Q.$}
\end{figure}
\end{center}

\subsection{Instability results in case $v$}
\label{inst_quadric}
In this case, suitable projection matrices are chosen as follows:
$$P_1 = \begin{bmatrix} 1&0 &0 &0 &0\\ 0& 1& 0& 0 &0\\0&0&1&0&0\end{bmatrix},
 P_2 = \begin{bmatrix}0&0 &0 &1 &0\\ 0&0&0&0&1\\1&1&1&0&0 \end{bmatrix},
 P_3 = \begin{bmatrix}0&1 &0 &0 &0\\ 0&0&1&0&0\\1&1&1&1&0 \end{bmatrix}$$
$$Q_1 = \begin{bmatrix} -1&0 & 0&0 &0\\ 0& 0& -1& 0 &0\\0&0&0&-1&0\end{bmatrix},
 Q_2 = \begin{bmatrix}0&0 &0&0&-1 \\ 1&0&0&0&0\\0&1&0&0&0 \end{bmatrix},
 Q_3 = \begin{bmatrix}0&0&1&0&0\\
                      0&0&0&1&0\\
                      0&0&0&0&1\\\end{bmatrix}$$
Various experiments on random choices give the results in the
Figure \ref{quadric} or similar to it.
\begin{center}
\begin{figure}
\includegraphics[width =0.8\linewidth]{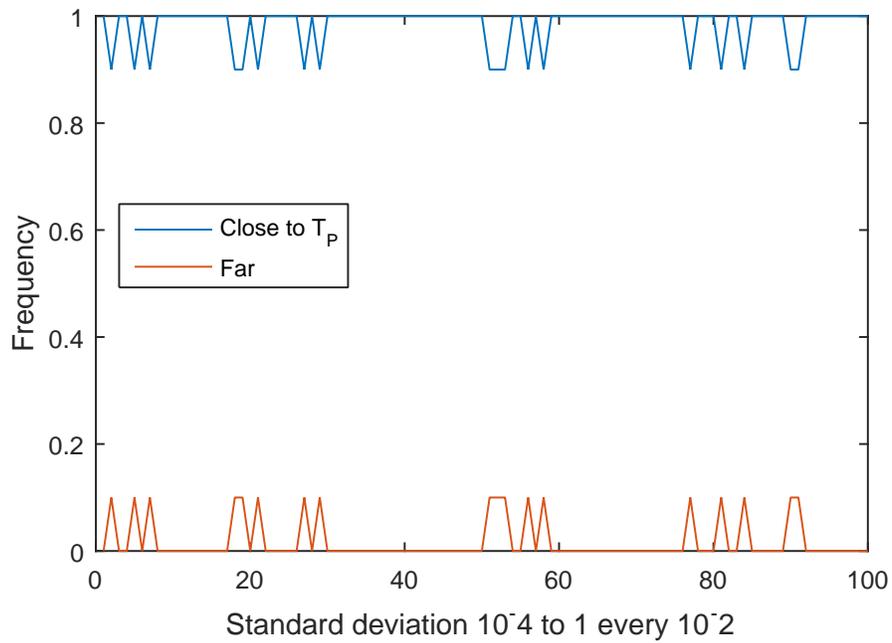}
\label{quadric}
\caption{Instability of reconstruction of a
trifocal tensor as described in \ref{inst_quadric} near $Q.$}
\end{figure}
\end{center}

In all cases, the experiments were performed as follows.
A set of $99$ points are generated on the quadric hypersurface
$Q,$ each of which is obtained as the second intersection of $Q$
with a line through a point chosen on one of the centers of
projection of the $P_i$ and a second point chosen randomly in
$\Pin{4}.$ The result of the experimental process described above
is presented below in Figure 2, where the frequency with which the
reconstructed solution is close or far from the true solution
$T_P,$ against the values of $\sigma$ utilized, is plotted. In
this case $m = 0.015,$ and $\delta = 0.03.$

The set of parameters utilized in this case
is as follows:
\begin{itemize}
\item $X_i \in Q$, $\delta = .03,$ $\sigma \in (10^{-4},1),$ every
    $10^{-2}.$
\end{itemize}
The experiment shows that reconstruction near the quadric is surprisingly very stable.

\newpage

\end{document}